\newtheorem{theorem}{Theorem}[section]
\newtheorem{lemma}[theorem]{Lemma}
\newtheorem{proposition}[theorem]{Proposition}
\newtheorem{corollary}[theorem]{Corollary}
\newtheorem{remark}[theorem]{Remark}
\newtheorem{example}[theorem]{Example}
\begin{document}

\title[finitely generated powers]{Finitely generated powers of prime ideals}
\author{Fran\c{c}ois Couchot}

\address{Universit\'e de Caen Normandie, CNRS UMR
  6139 LMNO,
F-14032 Caen, France}
\email{francois.couchot@unicaen.fr}  

\keywords{prime ideal, coherent ring, pf-ring, arithmetical ring}

\subjclass[2010]{13A15, 13E99}
\begin{abstract}
Let $R$ be a commutative ring. If $P$ is a maximal ideal of $R$ with a finitely generated power then we prove that $P$ is finitely generated if $R$ is either locally coherent or arithmetical or a polynomial ring over a ring of global dimension $\leq 2$. And, if $P$ is a prime ideal of $R$ with a finitely generated power then we show that $P$ is finitely generated if $R$ is either a reduced coherent ring or a polynomial ring over  a reduced arithmetical ring. These results extend a theorem of Roitman, published in 2001, on prime ideals of coherent integral domains.
\end{abstract}

\maketitle

\section{Introduction}
All rings are commutative and unitary. In this paper the following question is studied:

{\bf question A}: Suppose that some power $P^n$ of the prime ideal $P$ of a ring $R$ is finitely generated. Does it follow that $P$ is finitely generated?

When $P$ is maximal it is the {\it question 0.1} of \cite{GiHeRo99}, a paper by Gilmer, Heinzer and Roitman. The first author posed this question in \cite[p.74]{Gil72}. In \cite{GiHeRo99} some positive answers are given to the {\it question 0.1} (see \cite[for instance, Theorem 1.24]{GiHeRo99}), but also some negative answers (see \cite[Example 3.2]{GiHeRo99}). The authors proved a very interesting result ( \cite[Theorem 1.17]{GiHeRo99}): a reduced ring $R$ is Noetherian if each of its prime ideals has a finitely generated power. This {\it question 0.1} was recently studied in \cite{MaZe15} by Mahdou and Zennayi, where some examples of rings with positive answers are given, but also some examples with negative responses. In \cite{Roi01} Roitman investigated the {\bf question A}. In particular, he proved that $P$ is finitely generated if $R$ is a coherent integral domain (\cite[Theorem 1.8]{Roi01}).

We first study {\it question 0.1} in Section \ref{S:max}. It is proven that $P$ is finitely generated if $R$ is either locally coherent or arithmetical. In Section \ref{S:prime} we investigate {\bf question A} and extend the Roitman's result. We get a positive answer when $R$ is a reduced ring which is either coherent or arithmetical. If $R$ is not reduced, we obtain a positive answer for all prime ideals $P$, except if $P$ is minimal and not maximal. In Section \ref{S:pf}, by using Greenberg and Vasconcelos's results, we deduce that {\bf question A} has also a positive response if $R$ is a polynomial ring over either a reduced arithmetical ring or a ring of global dimension $\leq 2$. In Section \ref{S:min}, we consider rings of constant functions defined over a totally disconnected compact space $X$ with values in a ring $O$ for which a quotient space of $\mathrm{Spec}\ O$ has a unique point, and we examine when these rings give a positive answer to our questions. This allows us to provide some examples and counterexamples. 

 We denote respectively $\mathrm{Spec}\ R$, $\mathrm{Max}\ R$ and $\mathrm{Min}\ R,$ the
space of prime ideals, maximal ideals and minimal prime ideals of
$R$, with the Zariski topology. If $A$ is a subset of $R$, then we denote $(0:A)$ its annihilator and 
\[V(A) = \{ P\in\mathrm{Spec}\ R\mid A\subseteq P\}\ \mathrm{and}\ \ D(A) =\mathrm{Spec}\ R\setminus V(A).\]

\section{Powers of maximal ideals}\label{S:max}
 
Recall that a ring $R$ is {\bf coherent} if each finitely generated ideal is finitely presented. It is well known that $R$ is coherent if and only if $(0:r)$ and $A\cap B$ are finitely generated for each $r\in R$ and any two finitely generated ideals $A$ and $B$.

\begin{theorem}\label{T:cohmax}
Let $R$ be a  coherent ring. If $P$ is a maximal ideal such that $P^n$ is finitely generated for some integer $n>0$ then $P$ is finitely generated too.
\end{theorem}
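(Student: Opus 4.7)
The plan is to reduce the problem to the quotient $\bar R := R/P^n$, which will turn out to be a local ring with nilpotent maximal ideal. Since $P$ is maximal and any prime of $R$ containing $P^n$ must contain $P$, the ring $\bar R$ is local with unique maximal ideal $\bar P := P/P^n$, and by construction $\bar P^n = 0$. Moreover, as $R$ is coherent and $P^n$ is finitely generated, $P^n$ is even finitely presented, so $\bar R$ is a finitely presented, hence coherent, $R$-module. The consequence I will need is that for every $x \in R$ the ideal $(P^n : x)$ of $R$ is finitely generated, equivalently that $\mathrm{Ann}_{\bar R}(\bar x)$ is a finitely generated ideal of $\bar R$.

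The main step is to exhibit $x \in R$ whose annihilator in $\bar R$ is exactly $\bar P$. Let $m$ be the least positive integer with $P^m = P^n$. If $m = 1$, then $P = P^n$ is already finitely generated, so assume $m \geq 2$ and choose $x \in P^{m-1} \setminus P^n$; its image $\bar x$ in $\bar R$ is nonzero and satisfies $\bar P \cdot \bar x \subseteq \bar P^m = 0$. Therefore $\bar P \subseteq \mathrm{Ann}_{\bar R}(\bar x)$, while $\mathrm{Ann}_{\bar R}(\bar x)$ is a proper ideal of the local ring $\bar R$ and so is contained in $\bar P$. Hence $\mathrm{Ann}_{\bar R}(\bar x) = \bar P$, which is finitely generated by the first paragraph. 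Lifting a finite generating set of $\bar P$ to $R$ and adjoining a finite generating set of $P^n$ yields one for $P$.

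The critical technical input is the fact invoked in the first paragraph: finitely presented modules over a coherent ring are coherent, so that the cyclic submodule $R\bar x \subseteq \bar R$ is finitely presented and its kernel $(P^n : x)$ is finitely generated. I expect this to be the main place where coherence is doing work; without it nothing guarantees that annihilators of single elements in $\bar R$ are finitely generated. It is worth noting that an alternative approach via localization at $P$ would be obstructed by the fact that coherence does not in general pass to localizations, whereas the quotient $R/P^n$ does inherit the needed module-theoretic properties, and the maximality of $P$ is exactly what makes the quotient local.
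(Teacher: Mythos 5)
Your proof is correct and follows essentially the same route as the paper's: pass to $R/P^n$, which is local with nilpotent maximal ideal $\bar P$, take $m$ minimal with $\bar P^m=0$, and observe that for $x\in P^{m-1}\setminus P^n$ the annihilator of $\bar x$ is exactly $\bar P$, which is finitely generated by coherence. The only (harmless) difference is that you justify the finite generation of this annihilator via the coherence of $\bar R$ as an $R$-module, whereas the paper invokes the coherence of $R/P^n$ as a ring; both are standard consequences of $P^n$ being finitely presented.
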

\begin{proof}
First, suppose there exists an integer $n>0$ such that $P^n=0$. So, $R$ is local of maximal ideal $P$. We can choose $n$ minimal. If $n=1$ then $P$ is clearly finitely generated. Suppose $n>1$. It follows that $P^{n-1}\ne 0$. So, $P=(0:r)$ for each $0\ne r\in P^{n-1}$. Since $R$ is coherent, $P$ is finitely generated. Now, suppose that $P^n$ is finitely generated for some integer $n\geq 1$. If $R'=R/P^n$ and $P'=P/P^n$ then $R'$ is coherent and $P'^n=0$. From above we  deduce that $P'$ is finitely generated. Hence $P$ is finitely generated too.
\end{proof} 

The following theorem can be proven by using \cite[Lemma 1.8]{GiHeRo99}.
\begin{theorem}
\label{T:main} Let $R$ be a ring. Suppose that $R_L$ is coherent for each maximal ideal $L$. If $P$ is a maximal ideal such that $P^n$ is finitely generated for some integer $n>0$ then $P$ is finitely generated too.
\end{theorem}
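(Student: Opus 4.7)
The plan is to reduce the problem to the local case already handled by Theorem \ref{T:cohmax}, and then use \cite[Lemma 1.8]{GiHeRo99} to globalize. The hypothesis is tailor-made for this: taking $L = P$, the ring $R_P$ is coherent, and localization is exact so $(PR_P)^n = (P^n)R_P$ is generated by the images of the generators of $P^n$, hence finitely generated as an $R_P$-module. Since $PR_P$ is the maximal ideal of the coherent ring $R_P$, Theorem \ref{T:cohmax} applies directly and yields that $PR_P$ is finitely generated.

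It then remains to deduce from ``$P^n$ finitely generated'' together with ``$PR_P$ finitely generated'' that $P$ itself is finitely generated. This is precisely the content of \cite[Lemma 1.8]{GiHeRo99}, which I would simply cite. For completeness, the key observation underlying that lemma is that, because $P$ is maximal, the only maximal ideal of $R/P^n$ containing $P/P^n$ is $P/P^n$ itself, so $R/P^n$ is local with nilpotent maximal ideal. One checks that every element of $R \setminus P$ already becomes a unit in $R/P^n$, which gives a natural isomorphism $R/P^n \cong R_P/(PR_P)^n$. Therefore $P/P^n$ is identified with $PR_P/(PR_P)^n$, hence is finitely generated, and lifting finitely many generators of $P/P^n$ together with the generators of $P^n$ produces a finite generating set of $P$.

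Putting this together gives a three-line proof: coherence of $R_P$ plus finite generation of $(PR_P)^n$ yields, via Theorem \ref{T:cohmax}, finite generation of $PR_P$; then \cite[Lemma 1.8]{GiHeRo99} delivers the conclusion. I do not foresee a real obstacle, since both ingredients are already in place; the only point that needs a brief justification is the passage $(P^n)R_P = (PR_P)^n$, which is routine because $P^n$ is finitely generated (so localization commutes with the product).
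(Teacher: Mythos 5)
Your proof is correct, and it is in fact the route the paper itself alludes to in the sentence preceding the theorem (``can be proven by using \cite[Lemma 1.8]{GiHeRo99}''), though the proof actually printed in the paper is different. Both arguments share the same first step: apply Theorem \ref{T:cohmax} to the coherent local ring $R_P$, whose maximal ideal $PR_P$ has the finitely generated power $(PR_P)^n=P^nR_P$. They diverge in how they globalize. The paper argues locally at every maximal ideal: for $L\ne P$ one has $PR_L=R_L$ and some generator $x_i$ of $P^n$ already generates $PR_L$, so the ideal $Q$ generated by the $x_i$ together with lifts of generators of $PR_P$ satisfies $QR_L=PR_L$ for all maximal $L$, whence $Q=P$. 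You instead observe that $R/P^n$ is local with nilpotent maximal ideal $P/P^n$, so every element of $R\setminus P$ is already a unit modulo $P^n$ and $R/P^n\cong R_P/P^nR_P$; thus $P/P^n$ is finitely generated and lifting its generators together with those of $P^n$ generates $P$. Your version avoids the local-global patching entirely and isolates the useful general fact (finite generation of $P$ for $P$ maximal with $P^n$ finitely generated is equivalent to finite generation of $PR_P$), which is exactly the content of \cite[Lemma 1.8]{GiHeRo99}; the paper's version has the mild advantage of being self-contained and of introducing the patching template that is reused verbatim in Theorem \ref{T:main1}. One cosmetic remark: the identity $P^nR_P=(PR_P)^n$ holds for any ideal, with no need for the finite generation of $P^n$ that you invoke to justify it.
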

\begin{proof}
Suppose that $P^n$ is generated by $\{x_1,\dots,x_k\}$. Let $L\ne P$ be a maximal ideal. Let $s\in P\setminus L$. Then $s^n\in P^n\setminus L$. It follows that $s^nR_L=P^nR_L=PR_L=R_L$. So, there exists $i,\ 1\leq i\leq k$ such that $PR_L=x_iR_L$. Since $R_P$ is coherent, $PR_P$ is finitely generated by Theorem \ref{T:cohmax}. So, there exist $y_1, \dots, y_m$ in $P$ such that $PR_P=y_1R_P+\dots +y_mR_P$. Let $Q$ be the ideal generated by $\{x_1,\dots,x_k\}\cup\{y_1,\dots,y_m\}$. Then $Q\subseteq P$ and it is easy to check that $QR_L=PR_L$ for each maximal ideal $L$. Hence $P=Q$ and $P$ is finitely generated. 
\end{proof}
 
A ring $R$ is a {\bf chain ring} if its lattice of ideals is totally ordered by inclusion, and $R$ is {\bf arithmetical} if $R_P$ is a  chain ring for each maximal ideal $P$.

\begin{theorem}\label{T:arith}
Let $R$ be an arithmetical ring. If $P$ is a maximal ideal such that $P^n$ is finitely generated for some integer $n>0$ then $P$ is finitely generated too.
\end{theorem}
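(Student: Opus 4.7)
My plan is to parallel the proof of Theorem \ref{T:main}, replacing the local coherence hypothesis by the local chain-ring structure available for arithmetical rings, while supplying the local case by a direct argument in place of Theorem \ref{T:cohmax}.

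First I would reduce to the local case exactly as in the proof of Theorem \ref{T:main}. Writing $P^n=x_1R+\dots+x_kR$, for every maximal ideal $L\ne P$ the equality $PR_L=R_L$ is witnessed by any $s\in P\setminus L$, and one of the $x_i$ then generates $PR_L$. So it suffices to produce $y_1,\dots,y_m\in P$ whose images generate $PR_P$: the ideal $Q=(x_1,\dots,x_k,y_1,\dots,y_m)$ will then agree with $P$ locally at every maximal ideal, hence equal $P$.

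Second, I would use the chain-ring structure of $R_P$ to reduce to the case where the maximal ideal is nilpotent. A finitely generated ideal of a chain ring is principal, since a finite family of principal ideals is totally ordered by inclusion; so $(PR_P)^n=aR_P$ for some $a\in P^n$. Passing to the chain ring $R_P/aR_P$, whose maximal ideal satisfies $(PR_P/aR_P)^n=0$, reduces the task to showing that in this nilpotent-maximal-ideal case $PR_P$ is finitely generated; together with $a$, lifts of generators of $PR_P/aR_P$ then generate $PR_P$.

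The core of the argument is therefore the chain-ring case with $P^n=0$. Each quotient $P^i/P^{i+1}$ is annihilated by $P$, hence is a vector space over the field $R/P$. Because $R$ is a chain ring, its $R$-submodules (equivalently its $R/P$-subspaces) are totally ordered by inclusion, but a vector space with totally ordered subspace lattice has dimension at most one (two linearly independent vectors would span incomparable lines). Hence $R$ admits a composition series of length at most $n$, so $R$ is Artinian and in particular $P$ is finitely generated.

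I expect the two preliminary reductions to be bookkeeping mirroring Theorem \ref{T:main}; the third step carries the actual content, and the main obstacle---the crucial observation that really replaces the coherence hypothesis---is the elementary linear-algebra fact that a totally ordered lattice of subspaces forces dimension $\leq 1$.
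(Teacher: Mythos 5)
Your proof is correct. The globalization step is identical to the paper's (which likewise just invokes the argument of Theorem \ref{T:main}), but your treatment of the local case is genuinely different from, and somewhat longer than, the paper's. The paper argues by contraposition in two lines: if the maximal ideal $P$ of a chain ring is not finitely generated, then for any $r\in P$ one picks $a\in P\setminus Rr$, whence $Rr\subseteq Ra$ by comparability and $r=ab$ with $b$ necessarily a nonunit; thus $P=P^2$, so $P^n=P$ for all $n$ and no power is finitely generated. You instead run the direct implication: pass modulo a principal generator of $(PR_P)^n$ to reach the nilpotent case, then observe that each $P^i/P^{i+1}$ is an $R/P$-vector space with totally ordered subspace lattice, hence of dimension at most one, so the local ring has finite length and is Noetherian. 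Both arguments exploit the total ordering of ideals; the paper's idempotence trick is shorter and avoids the reduction step entirely (it never needs $P^n$ principal or the quotient construction), while yours yields the extra information that $R_P/(PR_P)^n$ is Artinian of length at most $n$ and stays structurally parallel to the coherent case. One cosmetic slip: in your second step you write that the task reduces to showing ``$PR_P$ is finitely generated'' in the nilpotent case, where you mean $PR_P/aR_P$; the surrounding sentence makes the intent clear.
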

\begin{proof}
First, assume that $R$ is local. Let $P$ be its maximal ideal. Suppose that $P$ is not finitely generated and let $r\in P$. Since $P\ne Rr$ there exists $a\in P\setminus Rr$. So, $r=ab$ with $b\in P$. It follows that $P^2=P$ and $P^n=P$ for each integer $n>0$. So, $P^n$ is not finitely generated for each integer $n>0$. Now, we do as in the proof of Theorem \ref{T:main} to complete the demonstration. 
\end{proof}
\begin{remark}
\textnormal{There exist arithmetical rings which are not coherent. In \cite{MaZe15} several other examples of non-coherent rings which satisfy the conclusion of the previous theorem are given.}
\end{remark}

Let $R$ be a ring. For a polynomial $f\in R[X]$, denote  by $c(f)$ (the content of $f$) the ideal of $R$ generated by the coefficients of $f$. We say that $R$ is {\bf Gaussian} if $c(fg)=c(f)c(g)$ for any two polynomials $f$ and $g$ in $R[X]$ (see \cite{Tsa65}). A ring $R$ is said to be a {\bf fqp-ring} if each finitely generated ideal $I$ is projective over $R/(0:I)$ (see \cite[Definition 2.1 and Lemma 2.2]{AbJaKa11}).

By \cite[Theorem 2.3]{AbJaKa11} each arithmetical ring is a fqp-ring and each fqp-ring is Gaussian, but the converses do not hold. The following examples show that Theorem \ref{T:arith} cannot be extented to the class of fqp-rings and the one of Gaussian rings.

\begin{example}
Let $R$ be a local ring and $P$ its maximal ideal. Assume that $P^2=0$. Then it is easy to see that $R$ is a fqp-ring. But $P$ is possibly not finitely generated.
\end{example}

\begin{example}
Let $A$ be a valuation domain (a chain domain), $M$ its maximal ideal generated by $m$ and  $E$ a vector space over $A/M$. Let $R=\{\binom{a\ e}{0\ a}\mid a\in A,\ e\in E\}$ be the trivial ring extension of $A$ by $E$. By \cite[Corollary 2.2 and Theorem 4.2]{Cou15} $R$ is a local Gaussian ring which is not a fqp-ring. Let $P$ be its maximal ideal. Then $P^2$ is generated by $\binom{m^2\ 0}{0\ m^2}$. But, if $E$ is of infinite dimension over $A/M$ then $P$ is not finitely generated over $R$ (see also \cite[Theorem 2.3({\it iv}){\bf a})]{MaZe15}).   
\end{example}

\section{Powers of prime ideals}\label{S:prime}

By \cite[Theorem 1.8]{Roi01}, if $R$ is a coherent integral domain then each prime ideal with a finitely generated power  is finitely generated too. The following example shows that this result does not extend to any coherent ring.

\begin{example}\label{E:exa}
Let $D$ be a valuation domain. Suppose there exists a non-zero prime ideal $L'$ which is not maximal. Moreover assume that $L'\ne L'^2$ and let $d\in L'\setminus L'^2$. If $R=D/Dd$ and $L=L'/Dd$, then $R$ is a coherent ring, $L$ is not finitely generated and $L^2=0$.
\end{example}

\begin{remark}
Let $R$ be an arithmetical ring. In the previous example we use the fact that each non-zero prime ideal $L$ which is not maximal is not finitely generated. In Theorem \ref{T:arith1} we shall prove that $L^n$ is not finitely generated for each integer $n>0$ if $L$ is not minimal. 
\end{remark}

In the sequel let $\Phi=\mathrm{Max}\ R\cup(\mathrm{Spec}\ R\setminus\mathrm{Min}\ R)$ for any ring $R$.

The proof of the following theorem is similar to that of \cite[Theorem 1.8]{Roi01}.

\begin{theorem}\label{T:mainP}
Let $R$ be a coherent ring. Then, for any $P\in\Phi$, $P$ is finitely generated if $P^n$ is finitely generated for some integer $n>0$.
\end{theorem}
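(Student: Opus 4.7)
The proof splits on whether $P$ is maximal. When $P$ is maximal, Theorem \ref{T:cohmax} immediately gives that $P$ is finitely generated, so the substantive case is $P\in\mathrm{Spec}\,R\setminus(\mathrm{Min}\,R\cup\mathrm{Max}\,R)$: $P$ is neither maximal nor minimal. The author indicates that the argument in this case parallels \cite[Theorem 1.8]{Roi01}, so my plan is to adapt Roitman's coherent-domain proof by replacing its systematic use of non-zero-divisors with a carefully chosen element of $P\setminus Q$ for some prime $Q\subsetneq P$.

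Since $P$ is non-minimal, fix a prime $Q\subsetneq P$ and an element $z\in P\setminus Q$. The property that lets $z$ stand in for a non-zero-divisor is that $(0:z^k)\subseteq Q$ for every $k\geq 1$: if $rz^k=0$, then $z^k\notin Q$ and $Q$ prime force $r\in Q$. In particular $(0:z^k)\subsetneq P$, and by coherence each $(0:z^k)$ is finitely generated. Thus $z$ behaves as a non-zero-divisor \emph{modulo $Q$}, and the key is that the ``defect'' ideal $(0:z^k)$ is small (contained in $Q$) and already finitely generated for free.

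Next, I would exploit coherence to show that $(P^n:z^k)$ is finitely generated for every $k$: since $R$ is coherent and $P^n$ is finitely generated, the colon ideal $(I:s)$ is finitely generated whenever $I$ is a finitely generated ideal and $s\in R$, because it is the kernel of the surjection $R\to (sR+I)/I$ onto a cyclic submodule of the coherent module $R/I$, hence onto a finitely presented module. Since $z\in P$ we have $z^{n-1}P\subseteq P^n$, so $P\subseteq (P^n:z^{n-1})$. The goal is then to pin down $(P^n:z^{n-1})$ tightly enough, up to a finitely generated correction, to conclude that $P$ itself is finitely generated.

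The main obstacle is the reverse containment, or, more realistically, an appropriate substitute for it. In Roitman's domain argument, $z^{n-1}$ is a global non-zero-divisor and $(P^n:z^{n-1})$ can be analysed via the fraction field of $R$; here the best one has is $(0:z^{n-1})\subseteq Q\subsetneq P$, so one expects to deduce not $(P^n:z^{n-1})=P$ outright but rather $(P^n:z^{n-1})\subseteq P+J$ for some finitely generated ideal $J$ supplied by coherence. Adapting the colon-ideal calculation from the domain setting to track this finitely generated ``error'' term, while checking that each auxiliary ideal produced along the way remains finitely generated, is the technical heart of the argument and corresponds precisely to the step in Roitman's proof where the domain hypothesis is invoked.
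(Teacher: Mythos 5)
Your reduction to the case where $P$ is neither maximal nor minimal (via Theorem \ref{T:cohmax}) matches the paper, and your observations that coherence makes $(0:z)$, colon ideals $(I:s)$ with $I$ finitely generated, and intersections of finitely generated ideals finitely generated are all correct. But the proposal is not a proof: you explicitly leave open the decisive step, namely how to recover $P$, up to a finitely generated correction, from $(P^n:z^{n-1})$, and for an arbitrary $z\in P\setminus Q$ this step genuinely fails. Nothing prevents $z^{n-1}$ from already lying in $P^n$, in which case $(P^n:z^{n-1})=R$ and the colon ideal carries no information about $P$; more generally there is no a priori inclusion $(P^n:z^{n-1})\subseteq P+J$ with $J$ finitely generated. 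What you call the ``technical heart'' is exactly the part that needs a new idea, and you have not supplied it.

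The missing ingredient is \cite[Lemma 1.7]{Roi01}, which the paper invokes: one may choose an integer $n>1$ with $P^n$ finitely generated together with an element $a\in P^{n-1}\setminus P^{(n)}$, where $P^{(n)}$ denotes the preimage of $P^nR_P$ in $R$; for such an $a$ one has the identity $aP=aR\cap P^n$, equivalently $(P^n:a)=P+(0:a)$. This is precisely the substitute for your unproved containment. The paper then takes $P'$ a minimal prime contained in $P$ (it exists since $P\in\Phi$ is not maximal, hence not minimal), replaces $a$ by $a+b$ with $b\in P^n\setminus P'$ if necessary so that $a\notin P'$, and deduces $(0:a)\subseteq P'\subset P$. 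Coherence now gives that $aP=aR\cap P^n$ (an intersection of two finitely generated ideals) and $(0:a)$ are finitely generated, and since $P\cap(0:a)=(0:a)$, the ideal $P$ is an extension of $aP\cong P/(0:a)$ by $(0:a)$, hence finitely generated. So the architecture you sketch --- a distinguished element of $P$ avoiding a smaller prime, plus coherence applied to annihilators and intersections --- is the right one, but without the lemma producing $a\notin P^{(n)}$ and the resulting identity $aP=aR\cap P^n$ the argument cannot be closed.
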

\begin{proof} Let $P\in\Phi$  such that $P^k$ is finitely generated for some integer $k>0$. By Theorem \ref{T:cohmax} we may assume that $P$ is not maximal. So, there exists a minimal prime ideal $P'$ such that $P'\subset P$. It follows that  $P^n\ne 0$ for each integer $n>0$. By \cite[Lemma 1.7]{Roi01} there exist an integer $n>1$ such that $P^n$ is finitely generated and $a\in P^{n-1}\setminus P^{(n)}$ where $P^{(n)}$ is the inverse image of $P^nR_P$ by the natural map $R\rightarrow R_P$. This implies that $aP=aR\cap P^n$. We may assume that $a\notin P'$, else, we replace $a$ with $a+b$ where $b\in P^n\setminus P'$. Since $R$ is coherent, $aP$ and $(0:a)$ are finitely generated. From $a\notin P'$ we deduce $(0:a)\subseteq P'\subset P$, whence $P\cap (0:a)=(0:a)$. Hence $P$ is finitely generated.
\end{proof}

\begin{corollary}
\label{C:mainCoh} Let $R$ be a reduced coherent ring. Then, for any prime ideal $P$, $P$ is finitely generated if $P^n$ is finitely generated for some integer $n>0$.
\end{corollary}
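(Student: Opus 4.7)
The plan is to observe that minimal primes that are not maximal are the only primes not already covered by Theorem~\ref{T:mainP}. Indeed, $\Phi = \mathrm{Max}\ R \cup (\mathrm{Spec}\ R\setminus\mathrm{Min}\ R)$, so if $P\in\mathrm{Spec}\ R\setminus\Phi$, then $P$ is minimal and not maximal. For such $P$ the strategy will be to use reducedness to upgrade the hypothesis that some power of $P$ is finitely generated to the statement that $P$ itself equals an annihilator $(0:s)$ for a suitable $s$. Coherence then finishes the job, since $(0:s)$ is finitely generated by hypothesis on $R$.

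The heart of the argument will be to produce $s\in R\setminus P$ with $sP=0$. First, since $P$ is minimal in a reduced ring, $R_P$ is a field (the localization is reduced and has $PR_P$ as its only prime), so $PR_P=0$. Writing $P^n=(x_1,\dots,x_k)$, each $x_i$ maps to $0$ in $R_P$, which yields some $s_i\notin P$ with $s_ix_i=0$. Setting $s=s_1\cdots s_k\notin P$ gives $sP^n=0$. To upgrade this to $sP=0$, I will observe that $(sP)^n=s^nP^n\subseteq sP^n=0$, so $sP$ is a nilpotent ideal; as $R$ is reduced, every element of $sP$ is nilpotent hence zero, forcing $sP=0$.

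Once $sP=0$ is in hand with $s\notin P$, primeness of $P$ immediately identifies $P$ with $(0:s)$: the inclusion $P\subseteq (0:s)$ is from $sP=0$, while $r\in (0:s)$ gives $rs=0\in P$ and hence $r\in P$ since $s\notin P$. Coherence of $R$ then makes $(0:s)$ finitely generated, so $P$ is too. The only mildly delicate point is the nilpotence step, but it follows at once from the identity $(sP)^n=s^nP^n$; everything else is bookkeeping using Theorem~\ref{T:mainP} and standard facts about localizations of reduced rings at minimal primes.
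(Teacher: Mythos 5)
Your proof is correct and follows essentially the same route as the paper: reduce to a minimal prime via Theorem~\ref{T:mainP}, use the localization $R_P$ (a field, since $R$ is reduced and $P$ is minimal) together with finite generation of $P^n$ to produce an element outside $P$ annihilating $P^n$, and then use reducedness to upgrade this to an annihilator of $P$ itself. Your endgame is marginally more economical: you exhibit $P=(0:s)$ for a single element $s$, so only the weakest consequence of coherence is needed, whereas the paper passes through $(0:P)=(0:P^n)$ and the double annihilator $P=(0:(0:P))$, which uses the fact that annihilators of finitely generated ideals are finitely generated in a coherent ring.
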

\begin{proof}
Let $P$ be a prime ideal of $R$ such that $P^n$ is finitely generated for some integer $n>1$. We may assume that $P\ne 0$ and  by Theorem \ref{T:mainP} that $P$ is minimal. So, $P^n\ne 0$. It is easy to check that $(0:P)=(0:P^n)$ because $R$ is reduced. Since $R$ is coherent, it follows that $(0:P)$ is finitely generated. On the other hand, since $P^n$ is finitely generated, there exists $t\in (0:P^n)\setminus P$. This implies that $P=(0:(0:P))$. We conclude that $P$ is finitely generated.
\end{proof}

An exact sequence of $R$-modules $0 \rightarrow F \rightarrow E \rightarrow G \rightarrow 0$  is {\bf pure}
if it remains exact when tensoring it with any $R$-module. Then, we say that $F$ is a \textbf{pure} submodule of $E$. The following proposition is well known.
\begin{proposition} \cite[Proposition 2.4]{Couc09} 
\label{P:purIdeal} Let $A$ be an ideal of a ring $R$. The following conditions are equivalent:
\begin{enumerate}
\item $A$ is a pure ideal of $R$;
\item for each finite family $(a_i)_{1\leq i\leq n}$ of elements of $A$ there exists $t\in A$ such that $a_i=a_it,\ \forall i,\ 1\leq i\leq n$;
\item for all $a\in A$ there exists $b\in A$ such that $a=ab$ (so, $A=A^2$);
\item $R/A$ is a flat $R$-module.
\end{enumerate} 
Moreover:
\begin{itemize}
\item if $A$ is finitely generated, then $A$ is pure if and only if it is generated by an idempotent;
\item if $A$ is pure, then $R/A=S^{-1}R$ where $S=1+A$.
\end{itemize} 
\end{proposition}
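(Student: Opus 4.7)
The plan is to establish the four equivalences cyclically via $(1) \Rightarrow (3) \Rightarrow (2) \Rightarrow (1)$, together with the separate equivalence $(1) \Leftrightarrow (4)$, and then to handle the two supplementary assertions.

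For $(1) \Leftrightarrow (4)$ I would argue directly from the definitions: tensoring the canonical sequence $0 \to A \to R \to R/A \to 0$ with an arbitrary $R$-module $M$ produces a long exact Tor sequence whose only possibly non-exact spot is injectivity of $A \otimes M \to M$, and the obstruction there is exactly $\mathrm{Tor}_1^R(R/A, M)$. Hence the sequence is pure for every $M$ if and only if $\mathrm{Tor}_1^R(R/A, -)$ vanishes identically, i.e.\ $R/A$ is flat.

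For $(1) \Rightarrow (3)$, given $a \in A$ I would use the test module $M = R/aR$. The standard identification $A \otimes_R R/aR \cong A/aA$, obtained via $b \otimes \bar r \mapsto \overline{br}$, turns the structural map $A \otimes M \to M$ into the map sending $\bar a \in A/aA$ to $\bar a = 0$ in $R/aR$; purity forces $\bar a$ to vanish already in $A/aA$, so $a \in aA$ and hence $a = ab$ for some $b \in A$. The implication $(3) \Rightarrow (2)$ proceeds by induction on $n$: pick $t'$ working for $a_1,\dots,a_{n-1}$ and $t'' \in A$ with $a_n t'' = a_n$, and verify that $t := t' + t'' - t't''$ lies in $A$ and satisfies $a_i t = a_i$ for every $i$. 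For $(2) \Rightarrow (1)$, given $\sum_i a_i \otimes m_i$ in the kernel of $A \otimes_R M \to M$, i.e.\ with $\sum_i a_i m_i = 0$, pick $t \in A$ with $a_i t = a_i$ for all $i$ and compute
\[\sum_i a_i \otimes m_i = \sum_i (t a_i) \otimes m_i = \sum_i t \otimes (a_i m_i) = t \otimes \sum_i a_i m_i = 0.\]

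For the supplementary claims: if $A = (a_1,\dots,a_n)$ is finitely generated and pure, applying (2) to the generators yields $t \in A$ with $a_i t = a_i$ for every $i$, whence by $R$-linearity $at = a$ for all $a \in A$, so $t^2 = t$ and $A = Rt$; conversely $Re$ with $e^2=e$ is trivially pure via (3). For the localization statement, every $1+a \in 1+A$ maps to $\bar 1$, a unit in $R/A$, yielding a surjection $(1+A)^{-1}R \to R/A$; injectivity uses (3), since $r \in A$ gives $r = rb$ with $b \in A$, so $(1-b) r = 0$ and $1 - b \in 1+A$ kills $r$ in the localization. I expect the only non-routine step to be recognizing that the test module $M = R/aR$ (together with the calculation $A \otimes_R R/aR \cong A/aA$) is exactly what converts the abstract purity of $A \to R$ into the concrete equational statement in (3); everything else is either a standard Tor manipulation or a short algebraic verification.
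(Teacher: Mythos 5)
Your proof is correct and complete: the Tor argument for $(1)\Leftrightarrow(4)$, the test module $R/aR$ for $(1)\Rightarrow(3)$, the idempotent-style combination $t'+t''-t't''$ for $(3)\Rightarrow(2)$, the tensor computation for $(2)\Rightarrow(1)$, and both supplementary verifications all check out. The paper itself gives no proof — it cites the statement from \cite[Proposition 2.4]{Couc09} — but your argument is the standard one for this classical fact, so there is nothing to flag.
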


If $R$ is a ring, we consider on $\mathrm{Spec}\ R$ the equivalence relation $\mathcal{R}$ defined by   $L\mathcal{R} L'$ if there exists a finite sequence of prime ideals $(L_k)_{1\leq k\leq n}$ such that $L=L_1,$ $L'=L_n$ and $\forall k,\ 1\leq k\leq (n-1),$ either $L_k\subseteq L_{k+1}$ or $L_k\supseteq L_{k+1}$. We denote by $\mathrm{pSpec}\ R$ the quotient space of $\mathrm{Spec}\ R$ modulo $\mathcal{R}$ and by $\lambda: \mathrm{Spec}\ R\rightarrow\mathrm{pSpec}\ R$ the natural map. The quasi-compactness of $\mathrm{Spec}\ R$ implies the one of $\mathrm{pSpec}\ R$, but generally $\mathrm{pSpec}\ R$  is not  $T_1$: see \cite[Propositions 6.2 and 6.3]{Laz67}. 

\begin{lemma}\label{L:pure} \cite[Lemma 2.5]{Couc09}. Let $R$ be a ring and let $C$ a closed subset of $\mathrm{Spec}\ R$. Then $C$ is the inverse image of a closed subset of $\mathrm{pSpec}\ R$ by $\lambda$ if and only if $C=V(A)$ where $A$ is a pure ideal. Moreover, in this case, $A=\cap_{P\in C}\ker(R\rightarrow R_P)$.
\end{lemma}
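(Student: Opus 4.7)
For the ($\Leftarrow$) direction, suppose $C = V(A)$ with $A$ pure. It suffices to show $V(A)$ is saturated under $\mathcal{R}$; then $V(A) = \lambda^{-1}(\lambda(V(A)))$ and $\lambda(V(A))$ is closed in $\mathrm{pSpec}\ R$ by the quotient topology. Since $V(A)$ is closed (hence stable under specialization), only stability under generization remains. Given $P \in V(A)$ and a prime $Q \subseteq P$, use Proposition \ref{P:purIdeal} to write any $a \in A$ as $a = ab$ with $b \in A$; then $b \in A \subseteq P$ forces $1-b \notin P$, hence $1-b \notin Q$, and $a(1-b) = 0 \in Q$ forces $a \in Q$. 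Thus $A \subseteq Q$ and $Q \in V(A)$.

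For the ($\Rightarrow$) direction, suppose $C = \lambda^{-1}(F)$ with $F$ closed in $\mathrm{pSpec}\ R$, so $C$ is closed and saturated. Set $A = \bigcap_{P \in C} \ker(R \to R_P)$. Since $\ker(R \to R_P) \subseteq P$ (an element annihilated by an $s \notin P$ lies in $P$ by primeness), one gets $C \subseteq V(A)$ immediately. The strategy now is to produce a pure ideal $A_0$ with $V(A_0) = C$ and to show $A_0 = A$, from which both the converse direction and the explicit formula follow. Existence of $A_0$ is provided by Lazard's bijection \cite[Propositions 6.2 and 6.3]{Laz67} between pure ideals of $R$ and open subsets of $\mathrm{pSpec}\ R$: the open subset $\mathrm{pSpec}\ R \setminus F$ corresponds to a pure ideal $A_0$ with $D(A_0) = \lambda^{-1}(\mathrm{pSpec}\ R \setminus F) = \mathrm{Spec}\ R \setminus C$, that is, $V(A_0) = C$.

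To identify $A_0$ with $A$: for $A_0 \subseteq A$, given $a \in A_0$ and $P \in C = V(A_0)$, the argument of the first paragraph applied to the pure ideal $A_0$ yields $b \in A_0 \subseteq P$ with $a(1-b)=0$ and $1-b \notin P$, whence $a \in \ker(R \to R_P)$; since $P$ was arbitrary, $a \in A$. For the reverse, take $r \in A$: the defining condition $\mathrm{Ann}(r) \not\subseteq P$ for every $P \in C = V(A_0)$ means $V(\mathrm{Ann}(r)) \cap V(A_0) = \emptyset$, hence $\mathrm{Ann}(r) + A_0 = R$; writing $1 = t + b$ with $t \in \mathrm{Ann}(r)$ and $b \in A_0$ gives $r = tr + br = br \in A_0$. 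Thus $A = A_0$ is pure with $V(A) = C$. The principal technical obstacle is the appeal to Lazard's bijection to produce $A_0$; a direct construction from scratch would need to combine quasi-compactness of $C$ (producing a finitely generated $J \subseteq \mathrm{Ann}(a)$ with $J + I = R$ for $I = \bigcap_{P \in C} P$, and hence $a = ay$ for some $y \in I$) with a delicate use of the saturation of $C$ to lift $y$ to an element of $A$.
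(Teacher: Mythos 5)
The paper does not actually prove this lemma: it is imported verbatim from \cite[Lemma 2.5]{Couc09}, so there is no internal argument to compare against, and any honest derivation you give is already more than the text supplies. With that caveat, your ($\Leftarrow$) direction is correct and complete: purity (via Proposition \ref{P:purIdeal}(3)) gives stability of $V(A)$ under generization, closedness gives stability under specialization, and since $\mathcal{R}$ is generated by single comparability steps this yields saturation, hence $V(A)=\lambda^{-1}(\lambda(V(A)))$ with $\lambda(V(A))$ closed for the quotient topology. Your identification $A_0=A$ is also correct in both inclusions; in particular the computation $1=t+b$, $r=br\in A_0$ is exactly the right way to get the displayed formula for $A$.

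The weight of the lemma, however, sits entirely in the step you delegate to ``Lazard's bijection'': the assertion that every closed saturated subset of $\mathrm{Spec}\ R$ equals $V(A_0)$ for some pure ideal $A_0$ is precisely the surjectivity half of the statement being proved, so as a self-contained argument this is circular-adjacent rather than a proof. (Note also that the paper cites \cite[Propositions 6.2 and 6.3]{Laz67} only for quasi-compactness and failure of $T_1$ of $\mathrm{pSpec}\ R$, not for the correspondence between pure ideals and saturated closed sets, so the attribution should be checked against Lazard's actual numbering.) Your closing sketch of a direct construction stops at exactly the hard point: quasi-compactness of $C=V(I)$ does give $a=ay$ with $y\in I$ for each $a\in\bigcap_{P\in C}\ker(R\to R_P)$, but upgrading $y\in I$ to an element of $A$ itself --- which is where the saturation hypothesis must enter, since for non-saturated $C$ (e.g.\ $C=\{p\mathbb{Z}\}$ in $\mathbb{Z}$) the ideal $\bigcap_{P\in C}\ker(R\to R_P)$ is not pure with $V$ equal to $C$ --- is left undone. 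So either keep the external citation (in which case your proof is on the same footing as the paper's, namely a reduction to the literature, and is acceptable), or supply that lifting argument; as written, that one step is a genuine gap in a from-scratch proof.
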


In the sequel, for each $x\in\mathrm{pSpec}\ R$ we denote by $A(x)$ the unique pure ideal which verifies   $\overline{\{x\}}=\lambda(V(A(x)))$, where $\overline{\{x\}}$ is the closure of $\{x\}$ in $\mathrm{pSpec}\ R$. 

\begin{theorem}\label{T:main1} Let $R$ be a ring. Assume that $R/A(x)$ is coherent for each $x\in\mathrm{pSpec}\ R$. Then, for any $P\in\Phi$, $P$ is finitely generated if $P^n$ is finitely generated for some integer $n>0$. 
\end{theorem}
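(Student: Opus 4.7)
The plan is to reduce modulo the pure ideal $A(x)$ attached to $x = \lambda(P) \in \mathrm{pSpec}\ R$, apply Theorem \ref{T:mainP} inside the coherent ring $R/A(x)$, and then lift the generators back to $R$ by using that $A(x)$ is idempotent (hence swallowed by $P^n$).

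First I would set $x = \lambda(P)$ and record that $A(x) \subseteq P$. Indeed, Lemma \ref{L:pure} identifies $V(A(x))$ with a $\lambda$-saturated closed set, so $V(A(x)) = \lambda^{-1}(\overline{\{x\}})$, and $P$ lies in this set because $\lambda(P) = x \in \overline{\{x\}}$. The crucial observation is then that $A(x)$ is pure, hence idempotent by Proposition \ref{P:purIdeal}(3); iterating gives $A(x) = A(x)^n$, and since $A(x) \subseteq P$ I obtain
\[
A(x) \;=\; A(x)^n \;\subseteq\; P^n .
\]
This inclusion is the key to controlling the pure ideal by the given finitely generated power.

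Next I would work in $R' = R/A(x)$ with $P' = P/A(x)$, which is prime, and verify that $P' \in \Phi(R')$. If $P$ is maximal then so is $P'$. If instead $P$ is not minimal in $R$, pick a prime $Q \subsetneq P$; since $Q \subseteq P$ the primes $Q$ and $P$ are $\mathcal{R}$-related, so $\lambda(Q) = x$ and therefore $A(x) \subseteq Q$. Hence $Q/A(x)$ is a prime of $R'$ strictly contained in $P'$, showing $P'$ is not minimal in $R'$. The ideal $(P')^n = (P^n + A(x))/A(x) = P^n/A(x)$ is a quotient of the finitely generated ideal $P^n$, so it is finitely generated in $R'$. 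Since $R'$ is coherent by hypothesis, Theorem \ref{T:mainP} applies and yields that $P'$ is finitely generated.

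Finally I would lift: choose $p_1,\dots,p_m \in P$ whose images generate $P'$, so that $P = (p_1,\dots,p_m)R + A(x)$. Using $A(x) \subseteq P^n$ together with finite generators $q_1,\dots,q_k$ of $P^n$, I conclude $P = (p_1,\dots,p_m,q_1,\dots,q_k)R$, completing the proof. The only step that requires a fresh idea, and the main point to get right, is the combination of purity (hence idempotence) of $A(x)$ with $A(x) \subseteq P$ to force $A(x) \subseteq P^n$; everything else is a routine reduction to Theorem \ref{T:mainP} and a lifting of generators.
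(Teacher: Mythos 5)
Your proof is correct. The two arguments share the same skeleton---pass to $R' = R/A(x)$, which is coherent by hypothesis, check $P' = P/A(x) \in \Phi(R')$, and invoke Theorem \ref{T:mainP} there---but they diverge in how the generators are reassembled in $R$. The paper takes the generators $x_1,\dots,x_k$ of $P^n$ together with lifts $y_1,\dots,y_m$ of generators of $P/A(x)$ and verifies $Q = P$ by a local--global argument: at a maximal ideal $L \not\supseteq A(x)$ one shows $PR_L = x_iR_L$ for some $i$ (as in Theorem \ref{T:main}), while at $L \supseteq A(x)$ the purity of $A(x)$ forces $A(x)R_L = 0$, so the $y_j$ suffice. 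Your route replaces this localization entirely by the single observation that $A(x)$ is idempotent (Proposition \ref{P:purIdeal}) and contained in $P$, hence $A(x) = A(x)^n \subseteq P^n$; the lifting then becomes the one-line ideal computation $P = (p_1,\dots,p_m) + A(x) \subseteq (p_1,\dots,p_m) + P^n \subseteq (p_1,\dots,p_m,q_1,\dots,q_k) \subseteq P$. This is cleaner and more self-contained. Two small points you handled correctly but that are worth flagging as the load-bearing checks: $A(x) \subseteq P$ follows from $V(A(x)) = \lambda^{-1}(\overline{\{x\}})$ via Lemma \ref{L:pure}, and the verification that $P'$ stays in $\Phi(R')$ uses that any prime $Q \subsetneq P$ satisfies $\lambda(Q) = \lambda(P)$, hence also contains $A(x)$. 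Both are sound, so the reduction to Theorem \ref{T:mainP} is legitimate.
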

\begin{proof}
Let $P\in\Phi$ and $I=A(\lambda(P))$. Suppose that $P^n$ is generated by $\{x_1,\dots,x_k\}$. Let $L$ be a maximal ideal such that $I\nsubseteq L$. As in the proof of Theorem \ref{T:main} we show that $PR_L=x_iR_L$ for some integer $i,\ 1\leq i\leq k$. By Theorem \ref{T:mainP} $P/I$ is finitely generated over $R/I$. So, there exist $y_1, \dots, y_m$ in $P$ such that ($y_1+I,\dots ,y_m+I$) generate $P/I$. Let $Q$ be the ideal generated by $\{x_1,\dots,x_k\}\cup\{y_1,\dots,y_m\}$. Then $Q\subseteq P$ and it is easy to check that $QR_L=PR_L$ for each maximal ideal $L$. Hence $P=Q$ and $P$ is finitely generated. 
\end{proof}

From Corollary \ref{C:mainCoh} and Theorem \ref{T:main1} we deduce the following.

\begin{corollary}\label{C:locCoh}
Let $R$ be a reduced ring. Assume that $R/A(x)$ is coherent for each $x\in\mathrm{pSpec}\ R$. Then, for any prime ideal $P$, $P$ is finitely generated if $P^n$ is finitely generated for some integer $n>0$. 
\end{corollary}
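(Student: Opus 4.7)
The plan is to mimic the argument of Theorem \ref{T:main1}, but with Corollary \ref{C:mainCoh} taking the place of Theorem \ref{T:mainP}. The reducedness hypothesis on $R$ is exactly what is needed to drop the restriction $P\in\Phi$ imposed in Theorem \ref{T:main1}.

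Fix a prime $P$ with $P^n=(x_1,\dots,x_k)$ for some $n>0$, and set $I=A(\lambda(P))$. I would first check that $I\subseteq P$: by Lemma \ref{L:pure}, $V(I)$ is the preimage under $\lambda$ of a closed set in $\mathrm{pSpec}\ R$, and $\lambda(P)\in\overline{\{\lambda(P)\}}=\lambda(V(I))$, so $P\in\lambda^{-1}(\lambda(V(I)))=V(I)$.

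Next, by Proposition \ref{P:purIdeal} one has $R/I\cong S^{-1}R$ for $S=1+I$, so $R/I$ is a localization of the reduced ring $R$ and is therefore reduced; by hypothesis it is also coherent. Hence Corollary \ref{C:mainCoh} applies to $R/I$: from $(P/I)^n=(P^n+I)/I$ being finitely generated one concludes that $P/I$ is finitely generated, say by the images of elements $y_1,\dots,y_m\in P$.

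Finally, following the gluing step of Theorem \ref{T:main1}, I would set $Q=(x_1,\dots,x_k,y_1,\dots,y_m)\subseteq P$ and verify $QR_L=PR_L$ at every maximal ideal $L$, which forces $Q=P$. When $I\nsubseteq L$, the inclusion $I\subseteq P$ gives $P\nsubseteq L$; picking $s\in P\setminus L$ yields $s^n\in P^n\setminus L$, so some $x_i$ becomes a unit in $R_L$ and $PR_L=x_iR_L=QR_L$, exactly as in Theorems \ref{T:main} and \ref{T:main1}. When $I\subseteq L$, Lemma \ref{L:pure} (via the description $I=\cap_{P'\in V(I)}\ker(R\rightarrow R_{P'})$) gives $IR_L=0$, so $PR_L$ coincides with the image of $P/I$ in $R_L$ and is therefore generated by $y_1,\dots,y_m$, whence again $QR_L=PR_L$.

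I do not expect a genuine obstacle: the content is entirely in combining Lemma \ref{L:pure} (which controls $I$ on $\mathrm{Spec}\ R$), Proposition \ref{P:purIdeal} (which transports reducedness from $R$ to $R/I$) and Corollary \ref{C:mainCoh} (which, now that $R/I$ is reduced coherent, handles every prime rather than only those of $\Phi$). The only points that deserve an explicit line are the containment $I\subseteq P$ and the vanishing $IR_L=0$ for $L\in V(I)$; both are immediate from the pure-ideal machinery already recorded above.
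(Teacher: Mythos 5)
Your proposal is correct and is essentially the paper's intended argument: the paper gives no separate proof, stating only that the corollary follows from Corollary \ref{C:mainCoh} and Theorem \ref{T:main1}, and the way to combine them is exactly as you do — rerun the gluing proof of Theorem \ref{T:main1} with Corollary \ref{C:mainCoh} applied to the reduced coherent localization $R/I=S^{-1}R$ in place of Theorem \ref{T:mainP}, which removes the restriction $P\in\Phi$. Your explicit verifications of $I\subseteq P$ and of $IR_L=0$ for $I\subseteq L$ are the right (and only) details to check.
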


\begin{theorem}\label{T:arith1}
Let $R$ be an arithmetical ring. Then, for any $P\in\Phi$, $P$ is finitely generated if $P^n$ is finitely generated for some integer $n>0$.
\end{theorem}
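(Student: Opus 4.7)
The plan is to split on whether $P$ is maximal. If $P$ is maximal, Theorem~\ref{T:arith} applies directly, so the only new case is $P \in \mathrm{Spec}\ R \setminus \mathrm{Min}\ R$ non-maximal. In this case I will prove the stronger assertion that $P^n$ is never finitely generated, rendering the implication vacuous.

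First I would localize. Pick a maximal ideal $M$ with $P \subsetneq M$ and pass to $R_M$, a chain ring. The non-minimality of $P$ supplies a prime $P' \subsetneq P$, and $P' \subseteq M$ gives $P'R_M \subsetneq PR_M \subsetneq MR_M$. Since $P^nR_M = (PR_M)^n$, it suffices to show that $(PR_M)^n$ is not finitely generated for any $n$.

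Working in the chain ring $R_M$ I would argue by contradiction: every finitely generated ideal of a chain ring is principal, so assume $(PR_M)^n = aR_M$. The strict inclusion $P'R_M \subsetneq PR_M$ produces an element of $PR_M$ outside the nilradical of $R_M$ (which, being the minimal prime of the chain ring, sits inside $P'R_M$), so $(PR_M)^n \neq 0$ and $a \neq 0$. Next, pick $c \in MR_M \setminus PR_M$: the chain property and $c \notin PR_M$ force $PR_M \subseteq cR_M$, and primality of $PR_M$ upgrades this to $PR_M = c\cdot PR_M$. Iterating yields $(PR_M)^n = c^n (PR_M)^n$, so $a = c^n a r$ for some $r \in R_M$; since $c$ lies in the maximal ideal of the local ring $R_M$, the element $1 - c^n r$ is a unit, forcing $a = 0$, a contradiction.

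The step I expect to be the main obstacle is certifying $(PR_M)^n \neq 0$, since the entire contradiction collapses if $a$ can be zero trivially. The non-minimality hypothesis is precisely what rescues this via the nilradical containment inside $P'$, and the remainder is routine chain-ring manipulation together with the standard fact that $1 + MR_M$ consists of units.
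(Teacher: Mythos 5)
Your proof is correct and follows essentially the same route as the paper: reduce to the non-maximal case via Theorem \ref{T:arith}, localize at a maximal ideal $M\supseteq P$, use non-minimality to get $P^nR_M\ne 0$, and exploit the chain-ring structure to show a nonzero $P^nR_M$ cannot be finitely generated. The only cosmetic difference is that the paper states $PR_M=MPR_M$ and invokes Nakayama, whereas you inline the same computation for a principal generator via the unit $1-c^nr$.
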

\begin{proof}
Let $P$ be a prime ideal. By Theorem \ref{T:arith} we may assume that $P$ is not maximal.  Let $M$ be a maximal ideal containing $P$. If $P$ is not minimal then $P^nR_M$ contains strictly the minimal prime ideal of $R_M$ for each integer $n>0$. So, $P^nR_M\ne 0$ for each integer $n>0$. On the other hand, since $R_M$ is a chain ring it is easy to check that $PR_M=MPR_M$. It follows that $P^nR_M=MP^nR_M$ for each integer $n>0$. By Nakayama Lemma we deduce that $P^nR_M$ is not finitely generated over $R_M$. Hence, $P^n$ is not finitely generated for each integer $n>0$.
\end{proof}

\begin{remark}
\textnormal{Example \ref{E:exa} shows that the assumption "$P\in\Phi$" cannot be omitted in some previous results. However, if each minimal prime ideal which is not maximal is idempotent then the conclusions hold for each prime ideal $P$.}
\end{remark}

\begin{proposition}\label{P:min1} Let $R$ be a ring. Let $P$ be a minimal prime ideal such that $P^n$ is finitely generated for some integer $n>0$. Then $P$ is an isolated point of $\mathrm{Min}\ R$.
\end{proposition}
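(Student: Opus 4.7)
The plan is to exploit the standard characterisation of minimal primes (every $a\in P$ is annihilated modulo zero by some $s\notin P$) applied to a finite generating set of $P^n$, to produce a single element $t\notin P$ with $tP^N=0$ for some $N$, and then observe that such a $t$ cuts out $P$ as an isolated point in $\mathrm{Min}\ R$.

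More precisely, let $x_1,\dots,x_k$ be generators of $P^n$. Since $P$ is a minimal prime, for each $i$ there is $s_i\in R\setminus P$ and an integer $N_i$ such that $(s_ix_i)^{N_i}=0$. Put $s=s_1\cdots s_k$, which lies in $R\setminus P$ because $P$ is prime, and let $M=\max_i N_i$, so that $s^Mx_i^M=0$ for every $i$. The first step is then to upgrade this to an annihilation of a power of $P$ itself.

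The main (and essentially only) calculation is a pigeonhole argument: $P^{nkM}$ is generated by products $x_{i_1}\cdots x_{i_{kM}}$ of $kM$ of the $x_i$'s, and in any such product some $x_j$ must occur at least $M$ times, so $s^M$ times any such product vanishes. Hence with $t=s^M$ and $N=nkM$ we obtain $tP^N=0$ with $t\notin P$.

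To finish, consider the basic open set $D(t)\cap\mathrm{Min}\ R$ in the subspace topology on $\mathrm{Min}\ R$. It contains $P$ since $t\notin P$. If $Q\in\mathrm{Min}\ R$ also lies in $D(t)$, then from $tP^N=0\subseteq Q$ and $t\notin Q$ we get $P^N\subseteq Q$, hence $P\subseteq Q$, and minimality of $Q$ forces $P=Q$. Thus $D(t)\cap\mathrm{Min}\ R=\{P\}$ and $P$ is isolated in $\mathrm{Min}\ R$. I do not anticipate any serious obstacle; the pigeonhole step is the only non-formal ingredient and is quite routine once one has concentrated the nilpotent-witnesses into a single element $s$.
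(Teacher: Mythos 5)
Your proof is correct and follows essentially the same route as the paper's: both arguments use the fact that, $P$ being a minimal prime, each generator of the finitely generated ideal $P^n$ becomes nilpotent after multiplication by a suitable element outside $P$, concentrate these witnesses into a single $t\notin P$, and conclude that $D(t)\cap\mathrm{Min}\ R=\{P\}$. The paper packages this as the identity $V(I)\cap\mathrm{Min}\ R=D((N:I))\cap\mathrm{Min}\ R$ for $I$ finitely generated, with $N$ the nilradical, which only requires $tP^n\subseteq N$ and thereby sidesteps your (perfectly valid) pigeonhole step for killing an actual power $P^{nkM}$.
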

\begin{proof}
Let $N$ be the nilradical of $R$. For any finitely generated ideal $I$ we easily check that $V(I)\cap\mathrm{Min}\ R=D((N:I))\cap\mathrm{Min}\ R$. Hence it is a clopen (closed and open) subset of $\mathrm{Min}\ R$. Since $V(P^n)\cap\mathrm{Min}\ R=\{P\}$, $P$ is an isolated point of $\mathrm{Min}\ R$ if $P^n$ is finitely generated.
\end{proof}

From Theorems \ref{T:main1} and \ref{T:arith1} and Proposition \ref{P:min1} we deduce the following corollary.

\begin{corollary}
Let $R$ be a ring. Assume that $\mathrm{Min}\ R$ contains no isolated point and $R$ satisfies one of the following conditions:
\begin{itemize}
\item $R/A(x)$ is coherent for each $x\in\mathrm{pSpec}\ R$;
\item $R$ is arithmetical.
\end{itemize}
Then, each prime ideal with a finitely generated power is  finitely generated too. 
\end{corollary}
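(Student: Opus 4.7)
The plan is to split according to whether the prime ideal $P$ with $P^n$ finitely generated lies in $\Phi = \mathrm{Max}\ R \cup (\mathrm{Spec}\ R \setminus \mathrm{Min}\ R)$. The only way a prime ideal can fail to be in $\Phi$ is to be minimal but not maximal, so this dichotomy exhausts the possibilities.

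First, I would handle the easy case $P \in \Phi$ by directly quoting the two main theorems of the section. If the first hypothesis holds (i.e.\ $R/A(x)$ is coherent for each $x \in \mathrm{pSpec}\ R$), then Theorem~\ref{T:main1} applies and gives that $P$ is finitely generated. If instead $R$ is arithmetical, then Theorem~\ref{T:arith1} applies and gives the same conclusion. No extra work is needed here; the hypothesis on isolated points of $\mathrm{Min}\ R$ plays no role in this case.

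For the remaining case $P \notin \Phi$, by definition $P$ is a minimal prime that is not maximal. Since $P^n$ is finitely generated by assumption, Proposition~\ref{P:min1} forces $P$ to be an isolated point of $\mathrm{Min}\ R$. This directly contradicts the standing assumption that $\mathrm{Min}\ R$ contains no isolated point, so this case is vacuous. Combining the two cases yields the corollary.

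There is no real obstacle here: the statement is designed to be a clean combination of the three cited results, and the only subtlety is remembering that a prime ideal that is both minimal and maximal already belongs to $\Phi$ (so that only the genuinely minimal non-maximal primes need to be ruled out by Proposition~\ref{P:min1}).
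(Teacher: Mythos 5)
Your proposal is correct and is exactly the deduction the paper intends: the corollary is stated as a consequence of Theorems~\ref{T:main1} and~\ref{T:arith1} (for $P\in\Phi$) together with Proposition~\ref{P:min1} (which makes the minimal, non-maximal case vacuous under the no-isolated-point hypothesis). Nothing is missing.
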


\begin{proposition}\label{P:min} Let $R$ be a ring for which each prime ideal contains only one minimal prime ideal. Let $P$ be a minimal prime ideal such that $P^n$ is finitely generated for some integer $n>0$. Then $\lambda(P)$ is an isolated point of $\mathrm{pSpec}\ R$.
\end{proposition}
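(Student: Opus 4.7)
The plan is to reduce the statement to showing that $V(P)$ is open in $\mathrm{Spec}\ R$: by the quotient topology, $\{\lambda(P)\}$ is open in $\mathrm{pSpec}\ R$ iff $\lambda^{-1}(\{\lambda(P)\})$ is open in $\mathrm{Spec}\ R$. Under the hypothesis, an easy induction along a chain of comparable primes shows that two primes are $\mathcal{R}$-equivalent iff they contain the same minimal prime; since $P$ is minimal, its $\mathcal{R}$-class is exactly $V(P)$, so $\lambda^{-1}(\{\lambda(P)\}) = V(P)$.

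I would set $K = \bigcap_{Q'\in\mathrm{Min}\ R\setminus\{P\}} Q'$. Any prime $Q\notin V(P)$ has its unique minimal prime $Q_0\ne P$, so $K\subseteq Q_0\subseteq Q$; hence $\mathrm{Spec}\ R\setminus V(P)\subseteq V(K)$. Consequently $V(P)=D(K)$ (which gives the desired openness) will follow as soon as I prove $P+K=R$, i.e.\ $V(P)\cap V(K)=\emptyset$.

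The core step is to construct, for each prime $Q\supseteq P$, an element of $K$ that lies outside $Q$. Since the unique-minimal-prime hypothesis passes to $R_Q$, the ideal $PR_Q$ is the unique minimal prime of $R_Q$ and therefore coincides with the nilradical of $R_Q$. The ideal $P^nR_Q$ is finitely generated and nil, hence nilpotent, so $P^{nM}R_Q=0$ for some $M$. Using that $P^{nM}$ is finitely generated in $R$, this gives $s_Q\in R\setminus Q$ with $s_Q P^{nM}=0$. For any minimal prime $Q'\ne P$, the relation $s_Q P^{nM}=0\subseteq Q'$ together with $P\not\subseteq Q'$ (incomparable minimal primes) and the primality of $Q'$ forces $s_Q\in Q'$; thus $s_Q\in K$.

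To finish, I would invoke quasi-compactness of $V(P)$, which is closed in the quasi-compact space $\mathrm{Spec}\ R$: the open cover $V(P)\subseteq\bigcup_{Q\in V(P)} D(s_Q)$ admits a finite subcover, yielding a finitely generated ideal $I=(s_{Q_1},\ldots,s_{Q_r})\subseteq K$ with $V(P)\subseteq D(I)$, equivalently $P+I=R$. A fortiori $P+K=R$, as required. The main delicate point is the passage from the local vanishing $P^{nM}R_Q=0$ to a global annihilator $s_Q\in R$; this is exactly where the finite generation of $P^{nM}$ in $R$ is essential, and it is also what makes $s_Q$ lie in every minimal prime other than $P$.
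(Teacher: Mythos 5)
Your proof is correct, but it takes a genuinely different route from the paper's. The paper works with the pure ideal $A=A(\lambda(P))$ attached to $\overline{\{\lambda(P)\}}$ via Lemma~\ref{L:pure}: since $A=A^2\subseteq P$, one gets $A\subseteq P^m$ for all $m$; since $P/A$ is the nilradical of $R/A$ and $P^n/A$ is finitely generated and nil, some power $P^m$ equals $A$; and a finitely generated pure ideal is generated by an idempotent (Proposition~\ref{P:purIdeal}), so $V(P)=V(P^m)=D(1-e)$ is clopen. You bypass the pure-ideal machinery entirely: you identify $\lambda^{-1}(\{\lambda(P)\})$ with $V(P)$, set $K=\bigcap_{Q'\in\mathrm{Min}\,R\setminus\{P\}}Q'$, and exhibit $V(P)=D(K)$ by producing, for each $Q\supseteq P$, an annihilator $s_Q\in K\setminus Q$ of $P^{nM}$ --- the same ``finitely generated nil implies nilpotent'' step as the paper, but performed in $R_Q$ rather than in $R/A$, followed by the standard local-to-global passage using finite generation of $P^{nM}$. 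Both arguments are sound; the paper's is shorter given the apparatus already established in Section~\ref{S:prime} and yields the extra information that $P^m$ is generated by an idempotent, while yours is self-contained and uses only elementary facts about localization and $\mathrm{Spec}$. One minor remark: your final quasi-compactness step is superfluous --- once each $Q\in V(P)$ admits $s_Q\in K\setminus Q$, no prime of $V(P)$ contains $K$, so $V(P)\cap V(K)=\emptyset$ immediately.
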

\begin{proof}
Let $P$ be a minimal prime ideal and $A=A(\lambda(P))$. Clearly $\lambda(P)=V(P)=V(A)$. We have  $A^2=A$. From $A\subseteq P$ we deduce that $A\subseteq P^2$. It follows that $A\subseteq P^n$ for each integer $n>0$. Suppose that $P^n$ is finitely generated for some integer $n>0$. Since $P/A$ is the nilradical of $R/A$, $P^m=A$ for some integer $m\geq n$. We deduce that $P^m=Re$ for some idempotent $e$ of $R$ by Proposition \ref{P:purIdeal}. It follows that $\lambda(P)=V(P^m)=D(1-e)$. Hence $\lambda(P)$ is an isolated point of $\mathrm{pSpec}\ R$. 
\end{proof}

\section{pf-rings}\label{S:pf}

Now, we consider the rings $R$ for which each prime ideal contains a unique minimal prime ideal. So, the restriction $\lambda'$ of $\lambda$ to $\mathrm{Min}\ R$ is bijective. In this case, for each minimal prime ideal $L$ we put $A(L)=A(\lambda(L))$. By \cite[Proposition IV.1]{Cou07} $\mathrm{pSpec}\ R$ is Hausdorff and  $\lambda'$ is a homeomorphism if and only if $\mathrm{Min}\ R$ is compact. We deduce the following from Lemma \ref{L:pure}.

\begin{proposition}
\label{P:pireg}
Let $R$ be a ring. Assume that each prime ideal contains a unique minimal prime ideal. Then, for each minimal prime ideal $L$, $V(L)=V(A(L))$. Moreover, if $R$ is reduced then $A(L)=L$.
\end{proposition}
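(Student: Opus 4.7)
The plan is to establish the two claims in sequence: first $V(L)=V(A(L))$, and then, when $R$ is reduced, the sharpening $A(L)=L$.

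For the first assertion, the key observation is that, under the hypothesis that every prime contains a unique minimal prime, the $\mathcal{R}$-equivalence class of any prime $P$ coincides with the set of primes containing the same minimal prime as $P$. Along any zig-zag chain of comparable primes the unique minimal prime below is preserved, and conversely two primes containing a common minimal prime $L$ are connected via the short chain $P\supseteq L\subseteq P'$. Applied to a minimal prime $L$, this gives $\lambda^{-1}(\lambda(L))=V(L)$, which is Zariski-closed; by the definition of the quotient topology, $\{\lambda(L)\}$ is then closed in $\mathrm{pSpec}\ R$, so $\overline{\{\lambda(L)\}}=\{\lambda(L)\}$. Combining this with the defining identity $\lambda(V(A(L)))=\overline{\{\lambda(L)\}}$ and with the $\lambda$-saturation of $V(A(L))$ guaranteed by Lemma \ref{L:pure}, one obtains $V(A(L))=\lambda^{-1}(\{\lambda(L)\})=V(L)$.

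For the reduced case, the inclusion $A(L)\subseteq L$ is immediate from $V(A(L))=V(L)$ by passing to radicals: $\sqrt{A(L)}=\bigcap_{P\supseteq L}P=L$. For the reverse inclusion I would invoke the ``moreover'' clause of Lemma \ref{L:pure}, which gives $A(L)=\bigcap_{P\supseteq L}\ker(R\to R_P)$. Since $R$ is reduced and every prime contains a unique minimal prime, the same properties pass to $R_P$, so each such $R_P$ is a reduced local ring with a unique minimal prime and is therefore an integral domain. For $P\supseteq L$, the unique minimal prime of $R_P$ is the image of $L$, which must then equal $0$; hence every $r\in L$ is killed in $R_P$, so $L\subseteq\ker(R\to R_P)$ for all $P\in V(L)$. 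Intersecting yields $L\subseteq A(L)$.

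The only conceptually substantive step is the identification of the $\mathcal{R}$-classes with the closed sets $V(L)$, which turns each $\lambda(L)$ into a closed point of $\mathrm{pSpec}\ R$; once this is in place, the rest is routine application of Lemma \ref{L:pure} together with the standard observation that a reduced ring with a unique minimal prime is a domain.
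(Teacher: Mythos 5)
Your proof is correct and follows essentially the same route as the paper: the identification of the $\mathcal{R}$-classes with the sets $V(L)$ together with Lemma \ref{L:pure} gives $V(L)=V(A(L))$, and in the reduced case the formula $A(L)=\bigcap_{P\in V(L)}\ker(R\to R_P)$ combined with $LR_P=0$ (each $R_P$ being a reduced local ring with unique minimal prime, hence a domain) yields $A(L)=L$. The paper merely states the reduced-case computation in one line and leaves the first claim to the preceding discussion; you have filled in exactly those details.
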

\begin{proof} If $R$ is reduced, then, for each $P\in V(L)$, $LR_P=0$, whence $L=\ker(R\rightarrow R_P)$.
\end{proof}

As in \cite[p.14]{Vas76} we say that a ring $R$ is a {\bf pf-ring} if one of the following equivalent conditions holds:
\begin{enumerate}
\item $R_P$ is an integral domain for each maximal ideal $P$;
\item each principal ideal of $R$ is flat;
\item each cyclic submodule of a flat $R$-module is flat.
\end{enumerate}
Moreover, if $R$ is a pf-ring then each prime ideal $P$ contains a unique minimal prime ideal $P'$ and $A(P')=P'$ by Proposition \ref{P:pireg}.

So, from the previous section and the fact that each minimal prime ideal of a pf-ring is idempotent, we deduce the following three results. Let us observe that each prime ideal of an arithmetical ring $R$ contains a unique minimal prime ideal because $R_P$ is a chain ring for each maximal ideal $P$.

\begin{corollary}
Let $R$ be a coherent pf-ring. Then each prime ideal with a finitely generated power is finitely generated too. 
\end{corollary}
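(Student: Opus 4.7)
The plan is to split on whether $P$ lies in $\Phi = \mathrm{Max}\ R \cup (\mathrm{Spec}\ R \setminus \mathrm{Min}\ R)$ or not. If $P \in \Phi$, then Theorem \ref{T:mainP} applies directly since $R$ is coherent, and we conclude that $P$ is finitely generated as soon as some power is. So the only remaining case is when $P$ is a minimal prime ideal that is not maximal.

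For that remaining case, I would exploit the pf-ring hypothesis. Since $R$ is a pf-ring, each prime (in particular the minimal prime $P$) contains a unique minimal prime ideal, namely itself, and by the discussion just before the corollary we have $A(P) = P$. By Lemma \ref{L:pure} (and the definition of $A(\lambda(P))$ as a pure ideal), $P$ is pure. Then Proposition \ref{P:purIdeal}, item (3), gives $P = P^2$, hence $P^n = P$ for every integer $n \geq 1$.

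Combining the two cases: if $P^n$ is finitely generated for some $n > 0$ and $P$ is minimal but not maximal, then $P = P^n$ is itself finitely generated; otherwise $P \in \Phi$ and Theorem \ref{T:mainP} finishes the job.

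There is no real obstacle here; the corollary is essentially bookkeeping, combining Theorem \ref{T:mainP} with the observation (already recorded in the paragraph preceding the statement) that each minimal prime of a pf-ring is idempotent. The only thing to double-check is that the equality $A(P') = P'$ genuinely forces $P' = (P')^2$, which is immediate from the characterization of pure ideals in Proposition \ref{P:purIdeal}.
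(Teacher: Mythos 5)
Your proof is correct and is essentially the argument the paper intends: it derives the corollary ``from the previous section and the fact that each minimal prime ideal of a pf-ring is idempotent,'' i.e.\ Theorem~\ref{T:mainP} for $P\in\Phi$ plus $P=P^2$ (via $A(P)=P$ pure) for minimal non-maximal primes. (One could also note that a pf-ring is reduced, so Corollary~\ref{C:mainCoh} applies directly, but your route is the one the paper indicates.)
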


\begin{corollary}
\label{C:mainbis} Let $R$ be a pf-ring. Assume that $R/L$ is coherent for each minimal prime ideal $L$. Then each prime ideal with a finitely generated power is finitely generated too. 
\end{corollary}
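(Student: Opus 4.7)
The plan is to split on whether $P$ is minimal, using that in a pf-ring each minimal prime is pure and hence idempotent. First I would record that any pf-ring is reduced: a nilpotent element is zero in every localization $R_M$ at a maximal ideal (each being a domain), and therefore zero in $R$. Combined with Proposition \ref{P:pireg} this gives $A(L) = L$ for each minimal prime $L$, so $L$ is pure and, by Proposition \ref{P:purIdeal}, satisfies $L = L^2$.

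If $P$ is itself minimal, then $P = P^2 = \dots = P^n$, so $P$ is finitely generated as soon as $P^n$ is. Suppose now that $P$ is not minimal, and let $L$ be the unique minimal prime properly contained in $P$. Since $L$ is idempotent, $L = L^n \subseteq P^n$, hence $(P/L)^n = P^n/L$ is finitely generated in the coherent integral domain $R/L$. Applying Theorem \ref{T:mainP} (or directly Roitman's \cite[Theorem 1.8]{Roi01}) to $R/L$, the non-zero prime $P/L$ is finitely generated; I would then pick $y_1, \dots, y_m \in P$ whose images generate $P/L$ and combine them with generators $x_1, \dots, x_k$ of $P^n$ to form $Q = (x_1, \dots, x_k, y_1, \dots, y_m) \subseteq P$.

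To conclude $Q = P$, I would imitate the local-global step used in the proof of Theorem \ref{T:main1}. At a maximal ideal $M$ with $P \nsubseteq M$, the relation $P^n R_M = R_M$ forces $Q R_M = P R_M = R_M$. At a maximal ideal $M$ containing $P$, the ring $R_M$ is a domain, so its minimal prime $L R_M$ vanishes; hence the images of $y_1, \dots, y_m$ already generate $P R_M$, giving $Q R_M = P R_M$. Equality at every maximal ideal yields $Q = P$.

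The main obstacle I anticipate is the local verification at maximal ideals containing $P$: it rests on $L R_M = 0$, which is precisely where the pf-ring hypothesis is essential (a general reduced ring would not suffice, since then $L R_M$ might be a nontrivial minimal prime of $R_M$ and the reduction to the quotient $R/L$ would not be visible on the localization).
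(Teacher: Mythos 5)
Your proof is correct and follows essentially the same route as the paper: the paper deduces the corollary from Theorem \ref{T:main1} (noting $A(L)=L$ by Proposition \ref{P:pireg}, so $R/A(x)\cong R/L$ is coherent) together with the idempotency of minimal primes in a pf-ring, while you simply unfold the proof of Theorem \ref{T:main1} in this special case, replacing the purity argument for $A(x)R_M=0$ by the direct observation that $LR_M=0$ when $R_M$ is a domain. All the steps check out, including the reduction to the coherent domain $R/L$ via $L=L^n\subseteq P^n$ and the local--global patching with $Q$.
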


\begin{corollary}
Let $R$ be a reduced arithmetical ring. Then each prime ideal with a finitely generated power is finitely generated too.
\end{corollary}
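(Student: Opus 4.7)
The plan is to split on whether $P\in\Phi$ or not, using Theorem \ref{T:arith1} for the first case and a pure-ideal argument for the second.

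For $P\in\Phi$ the conclusion is immediate from Theorem \ref{T:arith1}, which requires only arithmeticity and does not use reducedness. So the only remaining case is that in which $P$ is minimal and not maximal. My first move here is to observe that a reduced arithmetical ring is a pf-ring: for each maximal ideal $M$, $R_M$ is a chain ring (by arithmeticity) and reduced (since $R$ is). In a chain ring the set of prime ideals is totally ordered, so there is a unique minimal prime, which coincides with the nilradical; reducedness then forces this minimal prime to be zero, so $R_M$ is an integral domain. Hence $R$ is a pf-ring in the sense of the paper.

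Next, since $R$ is a pf-ring, each prime contains a unique minimal prime, so the standing hypothesis of Proposition \ref{P:pireg} is satisfied; as $R$ is reduced, that proposition yields $A(P)=P$. In particular $P$ is a pure ideal of $R$. Then Proposition \ref{P:purIdeal} gives $P=P^2$, and by induction $P=P^n$ for every $n>0$. Therefore, if $P^n$ is finitely generated for some $n>0$, so is $P$.

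I do not anticipate any real obstacle: the argument is a routine dichotomy followed by invocations of Theorem \ref{T:arith1} and of the pure-ideal machinery already assembled. The only mildly delicate point is the elementary fact that a reduced chain ring is a domain, which rests on the uniqueness of the minimal prime inside a chain ring.
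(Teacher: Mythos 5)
Your proof is correct and follows essentially the same route as the paper: Theorem \ref{T:arith1} disposes of the case $P\in\Phi$, and the remaining case of a minimal non-maximal prime is handled by observing that a reduced arithmetical ring is a pf-ring, whose minimal primes are pure (hence idempotent) by Propositions \ref{P:pireg} and \ref{P:purIdeal}, so that $P=P^n$. The paper compresses exactly this argument into the sentence preceding the three corollaries of Section \ref{S:pf}; you have merely written out the details.
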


The following three corollaries allows us to give some examples of pf-ring satisfying the conclusion of Corollary \ref{C:mainbis}. Let $n$ be an integer $\geq 0$ and $G$ a module over a ring $R$. We say that pd$\ G\leq n$ if Ext$^{n+1}_R(G,H)=0$ for each $R$-module $H$.

\begin{corollary}
Let $R$ be a coherent ring. Assume that each finitely generated ideal $I$ satisfies pd$\ I<\infty$. Then each prime ideal with a finitely generated power is finitely generated too.
\end{corollary}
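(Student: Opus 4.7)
The plan is to invoke Corollary \ref{C:mainbis}, which gives the conclusion once we verify (a) that $R$ is a pf-ring and (b) that $R/L$ is coherent for every minimal prime $L$; so I reduce to checking these two conditions under the hypotheses.

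For (a), I will show that $R_M$ is a domain for every maximal ideal $M$. Localizations of coherent rings are coherent: every f.g.\ ideal of $R_M$ has the form $IR_M$ for a f.g.\ ideal $I$ of $R$, and such $IR_M$ inherits a finite presentation from $I$. Moreover $\mathrm{pd}_{R_M}(IR_M)\leq\mathrm{pd}_R(I)<\infty$, so $R_M$ is again a coherent local ring in which every f.g.\ ideal has finite projective dimension. It then suffices to show that such a local ring $R'$ is a domain. For any non-zero $a\in R'$, the annihilator $(0:a)$ is finitely generated and the cyclic ideal $aR'\cong R'/(0:a)$ is finitely presented of finite projective dimension. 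If $\mathrm{pd}_{R'}(aR')=0$ then $aR'$ is projective, hence free over the local ring, forcing $(0:a)=0$; the positive-dimensional case is handled by the Vasconcelos theory of finite free resolutions in coherent regular local rings (see \cite{Vas76}), which yields the same conclusion via a rank/Fitting-ideal argument. Thus $(0:a)=0$ for every non-zero $a$, so $R'$ is a domain, and $R$ is a pf-ring.

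For (b), any pf-ring is automatically reduced, since it embeds into $\prod_M R_M$ with each $R_M$ a domain. Proposition \ref{P:pireg} then gives $A(L)=L$ for every minimal prime $L$, so $L$ is a pure ideal of $R$. The last bullet of Proposition \ref{P:purIdeal} identifies $R/L$ with the localization $(1+L)^{-1}R$, and localizations of coherent rings are coherent, so $R/L$ is coherent. Combined with (a) this allows the application of Corollary \ref{C:mainbis}, which finishes the proof. The main obstacle is (a), specifically the step showing that a coherent local ring whose f.g.\ ideals have finite projective dimension is a domain; this is where the full strength of the projective-dimension hypothesis is used, and it rests on external commutative-algebra techniques (the Vasconcelos finite free resolution machinery) rather than on the results already established in the excerpt.
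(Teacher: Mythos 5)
Your argument is correct and its essential content coincides with the paper's: the whole point is that the hypotheses force $R_M$ to be a domain for every maximal ideal $M$, i.e.\ that $R$ is a pf-ring, and this is exactly the Bertin--Glaz--Vasconcelos theorem on coherent regular rings (the paper simply cites \cite{Ber71} or \cite[Corollary 6.2.4]{Gla89} for it, whereas you sketch the proof, correctly in the $\mathrm{pd}=0$ case and with an appeal to the finite-free-resolution/McCoy machinery in general; that sketch is adequate but is the one place where you are leaning on an external theorem rather than proving anything). Where you diverge is in how you finish: the paper, having established that $R$ is a coherent pf-ring, invokes the first corollary of Section~\ref{S:pf} (``Let $R$ be a coherent pf-ring\dots''), while you route through Corollary~\ref{C:mainbis} and therefore take on the extra obligation of showing $R/L$ is coherent for each minimal prime $L$. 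Your verification of that extra condition is sound (pf-rings are reduced, so $A(L)=L$ is pure by Proposition~\ref{P:pireg}, hence $R/L=(1+L)^{-1}R$ by Proposition~\ref{P:purIdeal}, and localizations of coherent rings are coherent), but it is unnecessary work: since $R$ itself is coherent, the coherent-pf-ring corollary applies directly and spares you step (b) entirely.
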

\begin{proof}
By, either \cite[Th\'eor\`eme A]{Ber71} or \cite[Corollary 6.2.4]{Gla89}, $R_P$ is an integral domain for each maximal ideal $P$. So, $R$ is a pf-ring.
\end{proof}

\begin{corollary}
Let $A$ be a ring and $X=\{X_{\lambda}\}_{\lambda\in\Lambda}$ a set of indeterminates. Consider the polynomial ring $R=A[X]$. Assume that $A$ is reduced and arithmetical. Then each prime ideal of $R$ with a finitely generated power is finitely generated too.
\end{corollary}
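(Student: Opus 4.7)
The plan is to reduce to Corollary~\ref{C:mainbis}: it suffices to check that $R=A[X]$ is a pf-ring and that $R/L$ is coherent for each minimal prime $L$ of $R$.

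First, I would verify that $A$ itself is a pf-ring. Since $A$ is arithmetical, $A_P$ is a chain ring for every maximal ideal $P$, and it is reduced since $A$ is. A reduced chain ring is automatically a domain: if $ab=0$ with, say, $a=bc$ (using that the ideal lattice is totally ordered), then $a^2=abc=0$, which forces $a=0$. So $A_P$ is a domain for each maximal $P$, i.e., $A$ is a pf-ring.

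Next I would promote this to $R=A[X]$. Reducedness of $A$ passes to $R$, and the minimal primes of $R$ are precisely the extensions $L_0[X]$ of the minimal primes $L_0$ of $A$. If a maximal ideal $Q$ of $R$ contained two distinct minimal primes $L_1[X]\neq L_2[X]$, then contracting to $A$ would place two distinct minimal primes of $A$ below $Q\cap A$, contradicting the pf-property of $A$. Hence every maximal ideal of $R$ lies above a unique minimal prime, and together with the reducedness of $R$ this gives that $R_Q$ is a domain for each maximal $Q$, so $R$ is a pf-ring.

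Finally, for each minimal prime $L=L_0[X]$ of $R$, we have $R/L\cong(A/L_0)[X]$. Since a quotient of an arithmetical ring by a prime is an arithmetical domain, $A/L_0$ is a Pr\"ufer domain. Invoking the Greenberg--Vasconcelos-type coherence result alluded to at the beginning of Section~\ref{S:pf}, which guarantees that the polynomial ring in an arbitrary set of indeterminates over a Pr\"ufer domain is coherent, I conclude that $R/L$ is coherent. Corollary~\ref{C:mainbis} then yields the statement. The main obstacle is precisely this last coherence input: for finitely many indeterminates it is a classical consequence of the semihereditariness of Pr\"ufer domains, but for a general (possibly infinite) index set $\Lambda$ it is a nontrivial fact that has to be imported from the cited literature rather than reproved in place.
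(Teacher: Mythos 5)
Your proposal is correct and follows essentially the same route as the paper: reduce to Corollary~\ref{C:mainbis} by checking that $R=A[X]$ is a pf-ring and that $R/L$ is coherent for each minimal prime $L$, with the coherence coming from the Greenberg--Vasconcelos theorem applied to $(A/L_0)[X]$ over the Pr\"ufer domain $A/L_0$. The only cosmetic differences are that the paper verifies the pf-property by observing that $R_P$ is a localization of $A_{P\cap A}[X]$ with $A_{P\cap A}$ a valuation domain, and, rather than identifying the minimal primes of $R$ as the $L_0[X]$, it shows $R/P$ is a flat quotient, hence a localization, of $(A/L_0)[X]$ --- both are minor variants of the steps you carried out.
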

\begin{proof} Let $P$ be a maximal ideal of $R$ and $P'=P\cap A$. Thus $R_P$ is a localization of $A_{P'}[X]$. Since $A_{P'}$ is a valuation domain, $R_P$ is an integral domain. So, $R$ is a pf-ring. Now, let $P$ be a minimal prime ideal of $R$ and $L$ be a minimal prime ideal of $A$ contained in $P\cap A$. We put $A'=A/L$ and $R'=A'[X]$. So, $A'$ is an arithmetical domain (a Pr\"ufer domain). By \cite[3.(b)]{GrVa76} $R'$ is coherent. Since $R/P$ is flat over $R$ and $R'$, $R/P$ is a localization of $R'$. Hence $R/P$ is coherent. We conclude by Corollary \ref{C:mainbis}.
\end{proof}
 
Let $n$ be an integer $\geq 0$. We say that a ring $R$ is of global dimension $\leq n$ if pd$\ G\leq n$ for each $R$-module $G$.

\begin{corollary}
Let $A$ be a ring and $X=\{X_{\lambda}\}_{\lambda\in\Lambda}$ a set of indeterminates. Consider the polynomial ring $R=A[X]$. Assume that $A$ is of global dimension $\leq 2$. Then each prime ideal of $R$ with a finitely generated power is finitely generated too.
\end{corollary}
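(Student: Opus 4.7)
The plan is to invoke the preceding corollary, which guarantees the conclusion for any coherent ring in which every finitely generated ideal has finite projective dimension. Thus I need to verify two things for $R=A[X]$: first, that $R$ is coherent, and second, that pd$\ I<\infty$ over $R$ for every finitely generated ideal $I$.

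The first is the Greenberg--Vasconcelos result \cite[3.(b)]{GrVa76} already invoked in the previous corollary, applied now to $A$, which has global dimension $\leq 2$; this delivers coherence of the polynomial ring $A[X]$ for any set $X$ of indeterminates. For the second, I would reduce to finitely many variables. Given $I=(f_1,\dots,f_n)$, let $X'=\{X_{\lambda_1},\dots,X_{\lambda_m}\}\subseteq X$ be the finite subset of indeterminates occurring in the $f_i$, and set $A'=A[X']$ and $I'=(f_1,\dots,f_n)A'$. By Hilbert's syzygy theorem $A'$ has global dimension $\leq m+2$, so $I'$ admits a projective resolution of length $\leq m+1$ over $A'$. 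Since $R=A'[X\setminus X']$ is a free, hence flat, $A'$-algebra and $I=I'R\cong I'\otimes_{A'}R$, tensoring the resolution with $R$ preserves exactness and projectivity, producing a projective resolution of $I$ over $R$ of length $\leq m+1$. Hence pd$\ I<\infty$ over $R$, as required, and the coherent-ring corollary applies.

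The main obstacle is making sure \cite[3.(b)]{GrVa76} really applies under the weaker hypothesis that $A$ has global dimension $\leq 2$, rather than being Pr\"ufer as in the previous corollary. If it does not, a workaround is to apply Corollary \ref{C:mainbis} instead: a ring of finite global dimension is locally regular (Serre), hence a pf-ring, so each minimal prime $L'$ of $A$ is pure and $A/L'$ is a localization of $A$, still of global dimension $\leq 2$ and moreover a domain. Since the minimal primes of $R$ have the form $L'[X]$, one has $R/L'[X]=(A/L')[X]$, and the previously used Pr\"ufer/domain case of Greenberg--Vasconcelos suffices to conclude that this quotient is coherent, so Corollary \ref{C:mainbis} yields the result.
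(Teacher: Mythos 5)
Your primary route has a genuine gap: it requires $R=A[X]$ itself to be coherent, and \cite[3.(b)]{GrVa76} does not deliver this. That item concerns polynomial rings over Pr\"ufer domains, whereas a ring of global dimension $\leq 2$ need not be a domain at all (nor Pr\"ufer when it is one, e.g.\ $k[x,y]$). The paper's own proof is structured precisely to sidestep this: it never establishes coherence of $A[X]$ and does not go through the corollary on coherent rings whose finitely generated ideals have finite projective dimension. Your reduction of $\mathrm{pd}\, I<\infty$ to finitely many variables by flat base change is fine in itself, but it is moot without coherence of $R$, which you correctly flag as the obstacle and do not resolve.

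Your fallback is essentially the paper's argument (pf-ring plus coherence of $R/P$ for minimal primes $P$, then Corollary \ref{C:mainbis}), but three points need repair. First, ``locally regular (Serre)'' is not a valid justification outside the Noetherian setting; the correct statement is that a local ring in which every finitely generated ideal has finite projective dimension is a domain (\cite[Lemme 2]{LeB71}; see also \cite{Ber71} or \cite[Corollary 6.2.4]{Gla89}), which applies since the global dimension is $\leq 2$. Second, Corollary \ref{C:mainbis} is applied to $R$, so you must check that $R$ --- not just $A$ --- is a pf-ring; this does follow, since for a maximal ideal $P$ of $R$ the ring $R_P$ is a localization of $A_{P\cap A}[X]$, which is a domain. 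Third, $A/L'$ is a domain of global dimension $\leq 2$ but in general not Pr\"ufer, so the ``previously used Pr\"ufer case'' \cite[3.(b)]{GrVa76} does not apply to $(A/L')[X]$; what is needed is the companion result \cite[(4.4) Corollary]{GrVa76} on polynomial rings over domains of global dimension two (together with the coherence of $A/L'$, for which the paper cites \cite[Proposition 2]{LeB71}). Your identification of the minimal primes of $R$ as $L'[X]$ is correct and in fact slightly cleaner than the localization argument in the paper, but as written the fallback still rests on a citation that does not cover the case at hand.
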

\begin{proof}
Let $P$ be a maximal ideal of $R$ and $P'=P\cap A$. Thus $R_P$ is a localization of $A_{P'}[X]$. Since $A_{P'}$ is an integral domain by \cite[Lemme 2]{LeB71}, $R_P$ is an integral domain. So, $A$ and $R$ are pf-rings. By \cite[Proposition 2]{LeB71} $A/L$ is coherent for each minimal prime ideal $L$. Now, we conclude as in the proof of the previous corollary, by using  \cite[(4.4) Corollary ]{GrVa76}.
\end{proof}

\section{Rings of locally constant functions}
\label{S:min}

 A topological space is called \textbf{totally disconnected} if each of its connected components contains only one point. Every Hausdorff topological space $X$ with a base of clopen (closed and open) neighbourhoods is totally disconnected and the converse holds if $X$ is compact (see \cite[Lemma 29.6]{Wil70}).

\begin{proposition}\label{P:disc}
Let $X$ be a totally disconnected compact space, let $O$ be a ring with a unique point in $\mathrm{pSpec}\ O$. Let $R$ be the ring of all locally constant maps from $X$ into $O$. Then, $\mathrm{pSpec}\ R$ is homeomorphic to $X$ and $R/A(z)\cong O$ for each $z\in\mathrm{pSpec}\ R$.
\end{proposition}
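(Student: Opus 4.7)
The plan is to build a bijection $X \to \mathrm{pSpec}\ R$ via evaluation kernels, verify it is a homeomorphism, and identify $A(z_x)$ with the kernel. The hypothesis $|\mathrm{pSpec}\ O|=1$ forces $O$ to have only trivial idempotents (any nontrivial idempotent would split $\mathrm{Spec}\ O$ into two nonempty saturated clopen pieces, contradicting the hypothesis), so the idempotents of $R$ are exactly the indicator functions $\chi_U$ for clopen $U\subseteq X$; these lie in $R$ because they are locally constant.

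For each $x\in X$ the evaluation $\mathrm{ev}_x:R\to O$ is surjective (constant functions split it), and its kernel $I_x=\{f:f(x)=0\}$ is pure: if $f\in I_x$, the support $V=\{y:f(y)\ne 0\}$ is clopen, avoids $x$, and $f=f\chi_V$ with $\chi_V\in I_x$. Conversely, given a prime $Q\subset R$, the set $\{U\text{ clopen}:\chi_U\in Q\}$ is a prime ideal of the Boolean algebra of clopens of $X$ (from $\chi_U\chi_{U^c}=0\in Q$ and $\chi_U+\chi_{U^c}=1\notin Q$); by Stone duality and compactness of $X$, this ideal is determined by a unique point $x(Q)\in X$ characterized by $\chi_U\in Q\Longleftrightarrow x(Q)\notin U$. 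Moreover $I_{x(Q)}\subseteq Q$: if $f(x(Q))=0$ with support $V$, then $x(Q)\notin V$ so $\chi_V\in Q$, and $f=f\chi_V\in Q$.

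I then show the $\mathcal{R}$-classes of $\mathrm{Spec}\ R$ coincide with the fibers of $x(\cdot)$. If $Q_1\subseteq Q_2$, the associated prime ideals of the Boolean algebra are nested; since every prime ideal of a Boolean ring is maximal, the ideals coincide, so $x(Q_1)=x(Q_2)$. Inside the fiber over $x$, primes correspond bijectively to $\mathrm{Spec}(R/I_x)=\mathrm{Spec}\ O$, chains lift under the surjection $R\to R/I_x$, and the hypothesis that $\mathrm{pSpec}\ O$ has a single point makes the fiber a single $\mathcal{R}$-class. This yields a bijection $\phi:X\to\mathrm{pSpec}\ R$. To verify $\phi$ is a homeomorphism I apply Lemma \ref{L:pure}: the closed sets of $\mathrm{pSpec}\ R$ are exactly $\lambda(V(A))$ for pure ideals $A$, and evaluation gives $\phi^{-1}(\lambda(V(A)))=\{x\in X:A_x\ne O\}$. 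This set is closed in $X$ because $A_x=O$ is an open condition---if $a\in A$ has $a(x)=1$, then $a$ is locally constant so $a=1$ on a clopen neighborhood of $x$. Conversely every closed $C\subseteq X$ arises from the pure ideal $\{f\in R:f|_C=0\}$, using total disconnectedness of $X$ to exhibit, for any $x\notin C$, a clopen neighborhood of $x$ disjoint from $C$.

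Finally, $\mathrm{pSpec}\ R\cong X$ is Hausdorff, so $\overline{\{z_x\}}=\{z_x\}$; and $I_x$ is a pure ideal whose vanishing locus $V(I_x)$ is precisely the fiber $\lambda^{-1}(\{z_x\})$. The uniqueness clause of Lemma \ref{L:pure} then gives $A(z_x)=I_x$, whence $R/A(z_x)\cong O$ via $\mathrm{ev}_x$. The main technical point is setting up the dictionary between prime ideals of $R$, ultrafilters on the clopens of $X$, and points of $X$ cleanly; once this is done, both conclusions follow quickly from Lemma \ref{L:pure}.
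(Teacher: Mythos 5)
Your proof is correct and follows essentially the same route as the paper's: evaluation homomorphisms at points of $X$, their idempotent-generated (hence pure) kernels $I_x$, identification of the $\mathcal{R}$-classes of $\mathrm{Spec}\ R$ with the fibres $V(I_x)$ using that $\mathrm{pSpec}\ O$ is a single point, and Lemma \ref{L:pure} to conclude $A(z_x)=I_x$ and $R/A(z_x)\cong O$. The only variations are cosmetic: you obtain the point $x(Q)$ attached to a prime $Q$ via Stone duality for the Boolean algebra of clopens where the paper runs an explicit finite-cover argument with idempotents, and you characterize all closed sets of $\mathrm{pSpec}\ R$ by pure ideals where the paper exhibits a clopen base and invokes compactness.
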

\begin{proof} If $U$ is a clopen subset of $X$ then there exists an idempotent   $e_U$  defined by $e_U(x)=1$ if $x\in U$ and $e_U(x)=0$ else. Let $x\in X$ and $\phi_x:R\rightarrow O$ be the map defined by $\phi_x(r)=r(x)$ for every $r\in R$. Clearly $\phi_x$ is a ring homomorphism, and since $R$ contains all the constant maps, $\phi_x$ is surjective. Let $x\in X,\ r\in\ker (\phi_x)$ and $U=\{y\in X\mid r(y)\ne 0\}$. Then $U$ is a clopen subset. It is easy to check that $e_U\in\ker (\phi_x)$ and $r=re_U$. Since $\ker (\phi_x)$ is generated by idempotents, $R/\ker (\phi_x)$ is flat over $R$. For each $x\in X$, let $\Pi(x)$ be the image of $\mathrm{Spec}\ O$ by $\lambda\circ\phi_x^a$ where $\phi_x^a:\mathrm{Spec}\ O\rightarrow\mathrm{Spec}\ R$ is the continuous map induced by $\phi_x$.  We shall prove that $\Pi:X\rightarrow\mathrm{pSpec}\ R$ is a homeomorphism. Clearly, $V(\ker(\phi_x))\subseteq\Pi(x)$. Conversely, let $P\in\Pi(x)$. Then there exists $L\in V(\ker(\phi_x))$ such that $P\mathcal{R} L$. We may assume that $L\subseteq P$ or $P\subseteq L$. The first case is obvious. For the second case let $e$ an idempotent of $\ker(\phi_x)$. Then, $e\in L$, $(1-e)\notin L$, $(1-e)\notin P$ and $e\in P$. We conclude that $V(\ker(\phi_x))=\Pi(x)$ because $\ker(\phi_x)$ is generated by its idempotents. Let $x,y\in X$, $x\ne y$. By using the fact there exists a clopen subset $U$ of $X$ such that $x\in U$ and $y\notin U$ then $e_U\in\ker(\phi_y)$ and $(1-e_U)\in\ker(\phi_x)$. So, $\ker(\phi_x)+\ker(\phi_y)=R$, whence $\Pi$ is injective. By way of contradiction suppose there exists a prime ideal $P$ of $R$ such that $\ker(\phi_x)\nsubseteq P$ for each $x\in X$. There exists an idempotent $e_x'\in\ker(\phi_x)\setminus P$ whence $e_x=(1-e_x')\in P\setminus\ker(\phi_x)$. Let $V_x$ be the clopen subset associated with $e_x$. Clearly $X=\cup_{x\in X}V_x$. Since $X$ is compact, a finite subfamily $(V_{x_i})_{1\leq i\leq n}$ covers $X$. We put $U_1=W_1=V_{x_1}$, and for $k=2,\dots,n$, $W_k=\cup_{i=1}^k V_{x_i}$ and $U_k=W_k\setminus W_{k-1}$. Then $U_k$ is clopen for each $k=1,\dots,n$. For $i=1,\dots,n$ let $\epsilon_i\in R$ be the idempotent associated with $U_i$. Since $U_i\subseteq V_{x_i}$, we have $\epsilon_i=e_{x_i}\epsilon_i$. So, $\epsilon_i\in P$ for $i=1,\dots,n$. It is easy to see that $1=\Sigma_{i=1}^n\epsilon_i$. We get $1\in P$. This is false. Hence $\Pi$ is bijective. We easily check that $x\in U$, where $U$ is a clopen subset of $X$, if and only if $\Pi(x)\subseteq D(e_U)$. Since $A(\Pi(x))=\ker(\phi_x)$ is generated by its idempotents, $\mathrm{pSpec}\ R$ has a base of clopen neighbourhoods. We conclude that $\Pi$ is a homeomorphism. 
\end{proof}

From Corollary \ref{C:locCoh} we deduce the following proposition.

\begin{proposition}
Let $R$ be the ring defined in Proposition \ref{P:disc}. Assume that $O$ is a reduced coherent ring.  Then, for any prime ideal $P$, $P$ is finitely generated if $P^n$ is finitely generated for some integer $n>0$.
\end{proposition}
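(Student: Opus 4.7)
The plan is to invoke Corollary \ref{C:locCoh} directly, so the only work is to verify its two hypotheses for the ring $R$ of locally constant maps $X\to O$: namely, that $R$ is reduced and that $R/A(x)$ is coherent for every $x\in\mathrm{pSpec}\ R$.

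First I would check that $R$ is reduced. The ring $R$ sits inside the product ring $O^X$ as the subring of locally constant maps, and reducedness passes to subrings of products of reduced rings: if $r\in R$ satisfies $r^n=0$, then for every $x\in X$ the element $r(x)^n$ vanishes in $O$, and since $O$ is reduced, $r(x)=0$ for all $x$, hence $r=0$. So reducedness of $R$ is immediate from reducedness of $O$.

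Next I would verify the coherence hypothesis for the quotients $R/A(x)$. This is where Proposition \ref{P:disc} does all the heavy lifting: it tells us that $\mathrm{pSpec}\ R$ is homeomorphic to $X$ via the map $\Pi$, and more importantly that $R/A(z)\cong O$ for every $z\in\mathrm{pSpec}\ R$. Since $O$ is assumed to be coherent, each quotient $R/A(z)$ is coherent as well.

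With both hypotheses established, Corollary \ref{C:locCoh} applies verbatim to $R$, and the conclusion that every prime ideal $P$ of $R$ with some finitely generated power $P^n$ is itself finitely generated follows at once. There is essentially no obstacle here, since all the substantive content, identifying the pure ideals $A(x)$ and computing the quotients $R/A(x)$, was already handled in Proposition \ref{P:disc}; the present statement is simply the specialization of Corollary \ref{C:locCoh} to this class of rings.
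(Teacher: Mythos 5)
Your proposal is correct and matches the paper's intent exactly: the paper derives this proposition directly from Corollary \ref{C:locCoh}, and the two hypotheses you verify (reducedness of $R$ via its embedding in $O^X$, and coherence of $R/A(x)\cong O$ via Proposition \ref{P:disc}) are precisely what is needed. No gap.
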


\begin{proposition}\label{P:disc1} Let $R$ be the ring defined in Proposition \ref{P:disc}. Assume that $O$ has a unique minimal prime ideal $M$. Then, every prime ideal of $R$ contains only one minimal prime ideal and $\mathrm{Min}\ R$ is compact. If $M=0$ then $R$ is a pp-ring, i.e. each principal ideal is projective.
\end{proposition}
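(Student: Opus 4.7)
The plan is to lean on the identifications supplied by Proposition \ref{P:disc}, namely that $\Pi\colon X\to\mathrm{pSpec}\ R$ is a homeomorphism and that $R/A(\Pi(x))=R/\ker(\phi_x)\cong O$ for each $x\in X$. Given a prime $P$ of $R$, there is a unique $x\in X$ with $\lambda(P)=\Pi(x)$, so $P\supseteq\ker(\phi_x)$. Because $R/\ker(\phi_x)\cong O$ and $M$ is the unique minimal prime of $O$, the pullback $P_x:=\phi_x^{-1}(M)$ is the unique minimal element of $V(\ker(\phi_x))$. I would then check that $P_x$ is actually minimal in $R$: any prime $Q\subseteq P_x$ is comparable to $P_x$, hence $\lambda(Q)=\lambda(P_x)=\Pi(x)$, so $Q\in V(\ker(\phi_x))$, and minimality of $M$ in $O$ forces $Q=P_x$. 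Uniqueness across different $x$'s follows from the coprimality $\ker(\phi_x)+\ker(\phi_y)=R$ already established inside the proof of Proposition \ref{P:disc}: a prime containing both $P_x$ and $P_y$ for $x\ne y$ would contain $R$. Thus $\lambda'\colon\mathrm{Min}\ R\to\mathrm{pSpec}\ R$ is a bijection onto the Hausdorff space $\mathrm{pSpec}\ R\cong X$, and \cite[Proposition IV.1]{Cou07} (the result invoked just before Proposition \ref{P:pireg}) then yields the compactness of $\mathrm{Min}\ R$.

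For the last assertion, if $M=0$ then $(0)$ is prime in $O$, so $O$ is an integral domain. Given $r\in R$ I would set $V=\{x\in X\mid r(x)=0\}$; this set is clopen because a locally constant map on the compact space $X$ takes only finitely many values and each level set is clopen. Let $e=e_V$, so that $re=0$ directly. Conversely, if $rs=0$ then $r(x)s(x)=0$ in $O$ for every $x\in X$; the domain condition forces $s(x)=0$ whenever $r(x)\ne 0$, that is, whenever $x\notin V$, whence $s=se_V\in Re$. Therefore $(0:r)=Re$ is generated by an idempotent, which makes $Rr\cong R/(0:r)$ a direct summand of $R$ and hence projective.

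The main obstacle I expect is the very first step, verifying that $P_x$ is minimal in $R$ and not merely minimal among primes containing $\ker(\phi_x)$; the key point unlocking this is the observation that two comparable primes must share the same image under $\lambda$ and therefore lie in a common $V(\ker(\phi_x))$. Once this is in hand, the compactness of $\mathrm{Min}\ R$ is formal via \cite{Cou07}, and the pp-ring statement is the routine check that a locally constant map into a domain has its annihilator generated by the characteristic idempotent of its zero set.
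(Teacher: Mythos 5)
Your treatment of the first assertion (each prime contains a unique minimal prime) and of the pp-ring assertion is correct and essentially the paper's: the unique minimal prime below $P$ is $\phi_x^{-1}(M)$ for the unique $x$ with $P\in V(\ker(\phi_x))$, and your verification that this ideal is minimal in $R$ (comparable primes have the same image under $\lambda$, hence lie in the same $V(\ker(\phi_x))$) is a legitimate filling-in of the one line the paper devotes to it. For the pp-ring claim the paper takes $e=e_U$ with $U=\{x\mid r(x)\ne 0\}$ and shows $Re\to Rr$ is an isomorphism; your computation $(0:r)=Re_V$ with $V=X\setminus U$ is the same argument.

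The gap is in the compactness of $\mathrm{Min}\ R$. You establish that $\lambda'\colon\mathrm{Min}\ R\to\mathrm{pSpec}\ R$ is a continuous \emph{bijection} onto the compact Hausdorff space $\mathrm{pSpec}\ R\cong X$, but that alone does not make $\mathrm{Min}\ R$ compact: a continuous bijection onto a compact Hausdorff space can have a non-compact domain (e.g.\ the identity from $\{1/n\}\cup\{0\}$ with the discrete topology to the same set with its usual topology). The result you invoke, as quoted in the paper, asserts that $\mathrm{Min}\ R$ is compact if and only if $\mathrm{pSpec}\ R$ is Hausdorff \emph{and $\lambda'$ is a homeomorphism}, and you have not verified the homeomorphism hypothesis; so the step you call ``formal'' is exactly the missing content. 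The repair is not hard: either check directly that $\lambda'$ is open --- for $r\in R$ the image of $D(r)\cap\mathrm{Min}\ R$ corresponds under $\Pi$ to the set $\{x\in X\mid r(x)\notin M\}$, which is clopen because $r$ is locally constant --- or follow the paper's route, which avoids $\lambda'$ altogether: the nilradical $N$ of $R$ satisfies $R/N\cong R'$, the ring of locally constant maps from $X$ into $O/M$; by your own second part $R'$ is a pp-ring, $\mathrm{Min}\ R\cong\mathrm{Min}\ R'$, and the minimal spectrum of a pp-ring is compact by \cite[Proposition 1.13]{Vas76}.
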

\begin{proof} If $P$ is a prime ideal of $R$ then there exists a unique $x\in X$ such that $P\in\Pi(x)$. So, $\phi_x^a(M)$ is the only minimal prime ideal contained in $P$.

Assume that $M=0$. Let $r\in R$, $e=e_U$ where $U$ is the clopen subset of $X$ defined by $U=\{x\in X\mid r(x)\ne 0\}$. We easily check that the map $Re\rightarrow Rr$ induced by the multiplication by $r$ is an isomorphism. This proves that $R$ is a pp-ring.

Let $R'$ be the ring obtained like $R$ by replacing $O$ with $O/M$. It is easy to see that $R'\cong R/N$ where $N$ is the nilradical of $R$. So, $\mathrm{Min}\ R$ and  $\mathrm{Min}\ R'$ are homeomorphic. Since $R'$ is a pp-ring, $\mathrm{Min}\ R$ is compact by \cite[Proposition 1.13]{Vas76}.
\end{proof}

From Theorems \ref{T:main1} and \ref{T:arith1} and Propositions \ref{P:min} and \ref{P:disc1} we deduce the following corollary.

\begin{corollary}\label{C:disc1} Let $R$ be the ring defined in Proposition \ref{P:disc}. Suppose that $O$ has a unique minimal prime ideal $M$. Assume that $O$ is either coherent or arithmetical and that one of the following conditions holds:
\begin{enumerate}
\item $M$ is either idempotent or finitely generated;
\item $X$ contains no isolated point.
\end{enumerate}
 Then, for any prime ideal $P$, $P$ is finitely generated if $P^n$ is finitely generated for some integer $n>0$.
\end{corollary}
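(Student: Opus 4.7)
The plan is to split into two cases depending on whether $P$ lies in $\Phi$ or is minimal but not maximal, using Propositions \ref{P:disc} and \ref{P:disc1} as the bridge to the results cited in the statement.

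First I would handle $P \in \Phi$. Since Proposition \ref{P:disc} gives $R/A(x) \cong O$ for every $x \in \mathrm{pSpec}\ R$, when $O$ is coherent Theorem \ref{T:main1} applies directly. When $O$ is arithmetical I would first verify that $R$ itself is arithmetical: for each maximal ideal $L$ of $R$, writing $\lambda(L) = \Pi(x)$, the pure ideal $A(x)$ is contained in $L$ and is generated by idempotents all lying in $L$, so $A(x) R_L = 0$, whence $R_L \cong O_Q$ where $Q$ is the image of $L$ in $O$; since $O$ is arithmetical $O_Q$ is a chain ring. Then Theorem \ref{T:arith1} applies.

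Next, suppose $P$ is minimal but not maximal with $P^n$ finitely generated. Proposition \ref{P:disc1} ensures every prime of $R$ contains a unique minimal prime, so Proposition \ref{P:min} applies and forces $\lambda(P) = \Pi(x)$ to be an isolated point of $\mathrm{pSpec}\ R \cong X$. Under condition (2) this contradicts the hypothesis that $X$ has no isolated points, so no such $P$ exists and the conclusion holds vacuously in this branch. Under condition (1), the isolated point $x$ yields an idempotent $e \in R$ (the characteristic function of $\{x\}$) with $A(x) = (1-e)R$, producing a ring decomposition $R \cong O \times R_1$ where $R_1 = (1-e)R$ and $eR \cong O$. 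Under this isomorphism $P = \phi_x^{-1}(M)$ corresponds to $M \times R_1$ and $P^n$ to $M^n \times R_1$, so $P^n$ finitely generated over $R$ forces $M^n$ finitely generated over $O$. If $M$ is finitely generated then $P = M \times R_1$ is finitely generated; if $M$ is idempotent then $M = M^n$ is finitely generated, and again $P$ is finitely generated.

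The main obstacle I anticipate is the minimal-but-not-maximal branch under condition (1): it is the only step not resolved by a direct invocation of the cited results, and it requires extracting the product decomposition $R \cong O \times R_1$ from an isolated point of $X$ and then tracking $P$ and $P^n$ through it so as to reduce the question to one about $M$ and $M^n$ inside $O$.
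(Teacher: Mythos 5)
Your proof is correct and follows exactly the route the paper intends: the paper states this corollary without proof as a deduction from Theorems \ref{T:main1} and \ref{T:arith1} and Propositions \ref{P:min} and \ref{P:disc1}, and your argument is precisely that deduction with the implicit details supplied (namely that $R$ is arithmetical when $O$ is, via $A(x)R_L=0$ and $R_L\cong O_Q$, and the decomposition $R\cong O\times R_1$ at an isolated point of $X$ which reduces the minimal, non-maximal case to $M$ and $M^n$ in $O$). No gaps.
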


\begin{example}
Let $R$ be the ring defined in Proposition \ref{P:disc}. Assume that:
\begin{itemize}
\item  $O$ is either coherent or arithmetical, with a unique minimal prime ideal $M$;
\item $M$ is not finitely generated and  $M^k=0$ for some integer $k>1$ (for example, $O$ is the ring $R$ defined in Example \ref{E:exa});
\item $X$ contains no isolated points (for example the Cantor set, see \cite[Section 30]{Wil70}).
\end{itemize}
Then the property "for each prime ideal $P$, $P^n$ is finitely generated for some integer $n>0$ implies $P$ is finitely generated" is satisfied by $R$, but not by $R/A(L)$ for each minimal prime ideal $L$.
\end{example}

From Theorems \ref{T:main} and \ref{T:arith} and Proposition \ref{P:min} we deduce the following corollary.

\begin{corollary}\label{C:disc} Let $R$ be the ring defined in Proposition \ref{P:disc}. Assume that $O$ is local with maximal ideal $M$. Then each prime ideal of $R$ is contained in a unique maximal ideal, and for each maximal ideal $P$, $R_P\cong O$. Moreover, if one of the following conditions holds:
\begin{enumerate}
\item $O$ is coherent;
\item $O$ is a chain ring;
\item $X$ contains no isolated point and $M$ is the sole prime ideal of $O$.
\end{enumerate}
then, for each maximal ideal $P$, $P^n$ finitely generated for some integer $n>0$ implies $P$ is finitely generated.
\end{corollary}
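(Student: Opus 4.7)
The overall plan is to first establish the structural statements about primes of $R$ from Proposition \ref{P:disc}, and then to reduce each of the three sufficient conditions (1)--(3) to results already proved. Using the homeomorphism $\Pi:X\to\mathrm{pSpec}\ R$, every prime $P$ of $R$ lies in a unique equivalence class $\Pi(x)=V(\ker\phi_x)$, whose elements are precisely the ideals $\phi_x^{-1}(Q)$ for $Q\in\mathrm{Spec}\ O$. Since $O$ is local with maximal ideal $M$, the ideal $\phi_x^{-1}(M)$ is the unique maximal element of $\Pi(x)$, and it is actually a maximal ideal of $R$ because $R/\phi_x^{-1}(M)\cong O/M$ is a field. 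Any maximal ideal of $R$ containing $P$ lies in the same equivalence class $\Pi(x)$ (being linked to $P$ by a chain of inclusions), hence equals $\phi_x^{-1}(M)$, yielding the uniqueness claim.

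Next I would obtain the isomorphism $R_P\cong O$ for $P=\phi_x^{-1}(M)$. The key observation is that $\ker\phi_x$ is a pure ideal, as established in the proof of Proposition \ref{P:disc} (it is generated by idempotents). By Proposition \ref{P:purIdeal}, $R/\ker\phi_x\cong S^{-1}R$ with $S=1+\ker\phi_x$; since $\ker\phi_x\subseteq P$, every element of $S$ avoids $P$, so the localization at $P$ factors through $R/\ker\phi_x\cong O$ and then localizes it at its unique maximal ideal, giving $R_P\cong O_M=O$. This identification immediately disposes of cases (1) and (2): if $O$ is coherent then $R_L\cong O$ is coherent for every maximal $L$ of $R$, so Theorem \ref{T:main} applies; if $O$ is a chain ring, then $R_L\cong O$ is a chain ring for every maximal $L$, i.e.\ $R$ is arithmetical, and Theorem \ref{T:arith} applies.

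Case (3) is handled by Proposition \ref{P:min}, in an essentially vacuous manner. When $M$ is the sole prime of $O$, each class $\Pi(x)=\{\phi_x^{-1}(M)\}$ is a singleton, so every prime of $R$ is simultaneously minimal and maximal and contains a unique minimal prime (itself). If some power $P^n$ of a maximal ideal $P$ were finitely generated, Proposition \ref{P:min} would force $\lambda(P)$ to be an isolated point of $\mathrm{pSpec}\ R\cong X$, contradicting the hypothesis that $X$ has no isolated point; so no such power exists and the implication is vacuous. The main obstacle, modest as it is, lies in tracking the correspondence via $\Pi$ carefully enough to identify $R_P$ with $O$ in the maximal case; once that structural step is secured, each of the three cases reduces to a single appeal to an earlier result.
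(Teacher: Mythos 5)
Your proof is correct and follows exactly the route the paper intends: the paper states this corollary as a consequence of Theorems \ref{T:main} and \ref{T:arith} and Proposition \ref{P:min} without writing out the details, and your argument supplies precisely the missing structural steps (identifying $\Pi(x)=V(\ker\phi_x)$ with $\mathrm{Spec}\ O$, deducing the unique maximal ideal and $R_P\cong O$ via purity of $\ker\phi_x$, and handling case (3) vacuously through the isolated-point obstruction).
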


\begin{example}\label{E:count}
Let $R$ be the ring defined in Proposition \ref{P:disc}. Assume that $M$ is the sole prime ideal of $O$, $M$ is not finitely generated, $M^k=0$ for some integer $k>1$ and $X$ contains no isolated points. Then the property "for each maximal ideal $P$, $P^n$ is finitely generated for some integer $n>0$ implies $P$ is finitely generated" is satisfied by $R$, but not by $R_L$ for each maximal ideal $L$.
\end{example}

\section*{Acknowledgements}

This work was presented at the "Conference on Rings and Polynomials" held in Graz, Austria, July 3-8, 2016. I thank again the organizers of this conference.



\end{document}